\newtheorem{theorem}{Theorem}[section]
\newtheorem{lemma}[theorem]{Lemma}
\newtheorem{proposition}[theorem]{Proposition}
\newtheorem{problem}{Problem}
\newtheorem{remark}{Remark}
\newcounter{ass}[section]
\renewcommand*{\theass}{\thesection.\arabic{ass}}
\newcommand{\PQ}{\textsf{PQ} }
\newcommand{\PV}{\textsf{PV} }
\renewcommand{\j}{\boldsymbol{\mathrm{j}}}
\newenvironment{pfof}[1]{\vspace{1ex}\noindent{\itshape Proof of
    #1:}\hspace{0.5em}} {\hfill\oprocend\vspace{1ex}}
\newenvironment{proof}[1]{\vspace{1ex}\noindent{\itshape Proof:}\hspace{0.5em}} {\hfill\oprocend\vspace{1ex}}
\newcommand{\setdef}[2]{\{#1 \; | \; #2\}}
\newcommand{\map}[3]{#1: #2 \rightarrow #3}
\newcommand{\real}{\mathbb{R}}
\newcommand\oprocendsymbol{\hbox{$\square$}}
\newcommand\oprocend{\relax\ifmmode\else\unskip\hfill\fi\oprocendsymbol}
\newcommand{\vect}[1]{\mathbbold{#1}}
\newcommand{\vones}[1][]{\vect{1}_{#1}}
\newcommand{\vzeros}[1][]{\vect{0}_{#1}}
\DeclareSymbolFont{bbold}{U}{bbold}{m}{n}
\DeclareSymbolFontAlphabet{\mathbbold}{bbold}
\newcommand{\tb}{\color{black}}
\renewcommand{\circle}{\mathbb{S}^1}
\newcommand{\bsin}{\boldsymbol{\sin}}
\newcommand{\Ell}{\mathcal{E}^{\ell\ell}}
\newcommand{\Egl}{\mathcal{E}^{g\ell}}
\newcommand{\Egg}{\mathcal{E}^{gg}}
\begin{document}
%
\title{A Theory of Solvability for Lossless Power Flow Equations -- Part II: {\tb Conditions for Radial Networks}}
%
%
%
\author{John~W.~Simpson-Porco,~\IEEEmembership{Member,~IEEE}\\
\thanks{J.~W.~Simpson-Porco is with the Department of Electrical and Computer Engineering, University of Waterloo. Email: jwsimpson@uwaterloo.ca.}
}

%
%

\markboth{IEEE Transactions on Control of Network Systems. This version: \today}%
{Shell \MakeLowercase{\textit{et al.}}: Bare Demo of IEEEtran.cls for Journals}
%



\maketitle


\vspace{-2em}

\begin{abstract}
This two-part paper details a theory of solvability for the power flow equations in lossless power networks. 
In Part I, we derived a new formulation of the lossless power flow equations, which we term the fixed-point power flow. The model is parameterized by several graph-theoretic matrices -- the power network stiffness matrices -- which quantify the internal coupling strength of the network.
In Part II, we leverage the fixed-point power flow to study power flow solvability. For radial networks, we derive parametric conditions which guarantee the existence and uniqueness of a high-voltage power flow solution, and construct examples for which the conditions are also necessary. Our conditions (i) imply convergence of the fixed-point power flow iteration, (ii) unify and extend recent results on solvability of decoupled power flow, (iii) directly generalize the textbook two-bus system results, and (iv) provide new insights into how the structure and parameters of the grid influence power flow solvability.
\end{abstract}

\begin{IEEEkeywords}
Power flow equations, complex networks, power systems, circuit theory, optimal power flow, fixed point theorems.
\end{IEEEkeywords}

\section{Introduction}
\label{Section: Introduction}


In the companion paper \cite{JWSP:17a}, we developed a new model of coupled power flow for lossless networks, which we termed the fixed-point power flow (FPPF). The name references the fact that for radial networks, the fixed-point power flow can be written as a fixed-point equation $v = f(v)$ in the scaled voltage magnitudes $v_i = V_i/V_i^*$ at \PQ buses. {\tb For meshed networks, the FPPF has \textemdash{} for each cycle \textemdash{} one additional slack variable $y_i$ and one additional nonlinear constraint, which ensure that Kirchhoff's voltage law is satisfied around cycles in the network. Phase angles are not present in the FPPF; instead, phase differences are recovered uniquely (modulo $2\pi$) from a solution $(v,y)$. We showed through numerical testing that iterating the FPPF provides an effective means of solving the power flow equations, converging linearly from a flat start in both lightly and heavily loaded networks, with nearly zero sensitivity to initialization.}
The reader is referred to Part I for a detailed introduction, motivation, modeling assumptions, and a complete derivation of the FPPF. We now shift our focus from modeling and computation to analysis, and address the following problem.

\smallskip

\begin{problem}[\bf Power Flow Solvability Problem]\label{Prob:Main}
Give necessary and/or sufficient conditions on the active and reactive power injections, the generator voltage magnitudes, the network topology, and the series/shunt admittances under which the power flow equations possess a unique, high-voltage solution. In addition, quantify the location of the solution in voltage-space and angle-space in terms of the problem data.
\end{problem}

\subsection{Contributions of Part II}

We leverage the FPPF developed in Part I, and for radial networks we derive sufficient conditions which ensure the power flow equations possess a solution. Our conditions guarantee the existence of a solution within a desirable set in voltage space, where voltage magnitudes are near their open-circuit values and phase angle differences between buses are small. For simplified topologies containing no connections between \PQ buses, we further show that our existence condition implies the fixed-point power flow mapping $f$ is a contraction mapping. This in turn implies that (i) the solution is unique within the specified set, and (ii) the FPPF iteration $v_{k+1} = f(v_k)$ converges exponentially to the unique power solution. For certain cases, we show our conditions are also necessary for existence. As a byproduct of our analysis, we also establish the \emph{non-existence} of solutions within both a ``medium-voltage'' and an ''extra-high-voltage'' region of voltage-space. The existence of this medium-voltage solutionless region implies a lower bound in voltage-space between the unique high-voltage solution and any undesirable low-voltage solutions.

The conditions we derive are parametric, depending only on the given data of the power flow problem including fixed active and reactive power injections, shunt and series susceptances, the network topology, and \PV bus voltage set-points. Rather than imposing spectral or worst-case bounds individually on these quantities, our conditions fuse the relevant parameters together into intuitive loading margins for the system, by exploiting the stiffness matrices introduced in Part I \cite[Def. 2]{JWSP:17a}. These loading margins generalize the textbook two-bus network feasibility results \cite[Chapter 2]{TVC-CV:98}, and unify recent results on feasibility of decoupled active \cite{FD-MC-FB:11v-pnas} and reactive \cite{jwsp-fd-fb:14c} power flow. While our results are currently restricted to an unrealistic class of networks (lossless, radial), the analysis presented is \textemdash{} in the authors opinion \textemdash{} the most complete one available in the literature. The theoretical results here also provide a partial explanation for the robust numerical behavior of the FPPF iteration observed in Part I.


\subsection{Organization of Paper}

Section \ref{Sec:Literature} presents an extensive literature review, surveying the history of power flow solvability results, with a focus on incorporating structural information into solvability conditions. 

Section \ref{Sec:TwoBus} briefly reviews the grid model, then presents a detailed analysis of the classic two-bus model of power flow between a \PQ bus and a \PV bus, providing context for our network generalizations. Section \ref{Sec:PartI} briefly re-states the required definitions and notation for the FPPF model.


In Section \ref{Sec:MainResults} we state and prove our main results on the existence and uniqueness of power flow solutions. The proof is technical; casual readers will wish to simply read the theorem statements along with the explanations which follow them. Section \ref{Sec:Conclusions} concludes and lists some open problems.

\subsection{Notation and Preliminaries}

We refer the reader to Part I \cite{JWSP:17a} for notational conventions.

\smallskip

\emph{Fixed Point Theory:} A point $x^* \in \real^n$ is a \emph{fixed point} of a map $\map{f}{\real^n}{\real^n}$ if $f(x^*) = x^*$. A set $\mathcal{X} \subset \real^n$ is \emph{invariant} for $f$ if $x \in \mathcal{X} \Rightarrow f(x) \in \mathcal{X}$. If $\mathcal{X}$ is a compact, convex, and invariant set for $f$, then $f$ possess at least one fixed point $x^* \in \mathcal{X}$ \cite[Section 7, Corollary 8]{EHS:94}. We say $f$ is a \emph{contraction map} on $\mathcal{X}$ if there exists a $\beta \in {[0,1)}$ (the \emph{contraction rate}) and a vector norm $\|\cdot\|$ such that for any $x,y \in \mathcal{X}$, $\|f(x)-f(y)\| \leq \beta\|x-y\|$\@. If $f$ is a contraction map on an invariant compact set $\mathcal{X}$, then $f$ has a unique fixed point $x^* \in \mathcal{X}$ \cite[Theorem 9.32]{WR:76}, and the iterates of $x_{k+1}=f(x_k)$ of $f$ from any initial point $x_0 \in \mathcal{X}$ satisfy $\|x_k-x^*\| \leq \beta^k\|x_0-x^*\|$.

\section{Literature on Power Flow Solvability}
\label{Sec:Literature}

\subsection{Summary of Results: 1972 -- 2016}

Korsak \cite{AJK:72} appears to be among the first to explicitly study the solution space of power flow equations and, in the context of swing stability, constructed an example showing that decoupled active power flow can possess multiple ``stable'' solutions in meshed networks. Simultaneously, Tavora and Smith \cite{CJT-OJMS:72b} studied those same equations, giving a necessary condition for solvability and studying the singular surfaces of the active power flow Jacobian for low-dimensional systems. A more detailed study of decoupled active power flow was later performed by Araposthatis, Sastry and Varaiya \cite{AA-SS-VP:81,AA-PV:83}.
Galiana and Jarjis \cite{FDG:75,JJ-FDG:81,FDG:83}  investigated the coupled equations in rectangular coordinates, proposing feasibility conditions based on supporting hyperplanes for a generalized injection region in parameter-space. Unfortunately the assumption that the injection region is a convex  cone was later shown to be incorrect; see, for example, \cite{YVM-DJH-IAH:00,IAH-RJD:01,YVM-DZY-DJH:08,YVM-BV-DW-BL-ZH-STE-ZHH:14}. Soon after, Baillieul and Byrnes \cite{JB-CIB:82} leveraged geometric techniques to provide a combinatorial upper bound on the number of complex solutions; for particularly clear expositions on the nature of multiple solutions, see the articles \cite{BKJ:77,AK-JW:91,DKM-DM-MN:16}.

Wu was the first to apply the fixed point and degree techniques developed for nonlinear circuits to power flow \cite{FFW:77}. In two breakthrough papers \cite{FFW-SK:80,FW-SK:82}, Wu and Kumagai applied the Leray-Schauder fixed point theorem to derive conditions for solvability of both the decoupled active and reactive power flow equations. Using homotopy and degree theory, they showed that under some additional conditions, the coupled power flow equations also possess a solution. 
In a similar spirit, Thorp, Schulz and Ili\'{c} \cite{JT-DS-MIS:86,MI:92} later applied nonlinear circuit results 
to derive a conservative condition for solvability of decoupled reactive power flow. In \cite{HDC-MEB:90} Chiang and Baran studied feasibility on distribution feeders using Baran's branch flow model, arguing in a quasi-quantitative manner that the system has a unique solution. Unfortunately the arguments provided no quantitative bounds. Grijalva and Sauer \cite{SG-PWS:05,SG:12} proposed necessary conditions for Jacobian singularity based on the saturation of transmission lines. In \cite{BCL-PWS-MAP:99} Lesieutre, Sauer, and Pai derived sufficient solvability conditions, but the analysis forbids constant power loads. {\tb In \cite{WD-AEB-RO-FLL:09} voltage magnitudes were isolated in terms of voltage phase angles, yielding a semi-explicit power flow solution, but the analysis is restricted to impedance load models.}

While little progress was made through the 2000s, a recent flurry of activity has yielded new results. In \cite{FD-MC-FB:11v-pnas,FD-FB:13c} D\"{o}rfler \emph{et al.} noted that the decoupled active power flow in polar form could be \emph{exactly} solved on acyclic networks, leading them to an explicit necessary and sufficient condition for existence and uniqueness. The condition was also shown to also be tight for networks with short cycles, and a conservative sufficient condition was given for general meshed networks; see also \cite{NA-SG:13}. {\tb Barabanov \emph{et al.} proposed a necessary condition for LTI systems with constant power loads \cite{NB-RO-RG-BP:16}.} Molzahn \emph{et al.} proposed a necessary condition for the coupled lossy power flow problem. In \cite{jwsp-fd-fb:14c} the author and his collaborators derived a sufficient condition for solvability of decoupled reactive power flow, analogous to the active power flow results in \cite{FD-MC-FB:11v-pnas}. For distribution feeders, Bolognani and Zampieri \cite{SB-SZ:15} reformulated the complex-valued power flow as a fixed point mapping, and derived a condition under which there exists a unique solution, with a relaxed condition proposed later by Yu \emph{et al.} in \cite{SY-HDN-KST:15}, and further distribution system results in \cite{ZW-BC-JW:16,CW-AB-JYLB-MP:16}.

As a complement to fixed point-based approaches, convexity has emerged as a powerful tool for analyzing power flow solvability. In the context of optimal power flow, sufficient conditions have been found for both radial and meshed networks under which various convex relaxations are exact; see \cite{SHL:14a,SHL:14b} for a broad survey of recent results. The implication is that under such conditions, convex programming can be used to iteratively find the solution, or otherwise certify that no solution exists. Inspired by classical work on transient stability, Dvijotham \emph{et al.} have pursued the energy-function formulation of lossless power flow and derived linear matrix inequality conditions under which the energy function is convex on a suitable subset of voltage-space \cite{KD-SL-MC:15a}. This work has subsequently been broadened to the study of the lossy power flow equations by leveraging monotone operator theory \cite{KD-SL-MC:15b}. 
Finally, we note that at times there is a blur between the literature on power flow solvability and the literature on quasi-static voltage stability and collapse. Surveying the latter is far beyond the scope of this article, but the interested reader can find many connections in \cite{TTL-RAS:91,MKP:93,CAC:02,CAC:95} and in the text \cite{TVC-CV:98}.

To summarize, the literature now contains a fairly robust understanding of existence and uniqueness of a desirable solution for both decoupled active and reactive power flow \cite{FD-MC-FB:11v-pnas,jwsp-fd-fb:14c}; the only remaining area of confusion is for decoupled active power flow in heavily meshed networks, {\tb where surprising examples continue to be found \cite{RD-TC-PJ:17}}. The situation for both lossless and lossy coupled power flow remains comparatively opaque, with only implicit, necessary, or restrictive results available. In Section \ref{Sec:MainResults} we present new results in this direction.


\subsection{Grid Topology and Power Flow Solvability}
\label{Sec:Structural}

One goal of this paper is to develop a useful theory for studying how topology and grid parameters influence power flow solvability. The perplexing role of network topology has long been noted by those closest to the feasibility problem. In his 1975 paper \cite{FDG:75}, Galiana pondered \emph{``\textrm{[Power flow feasibility]} is one question which is unresolved in power systems analysis, but which is of basic theoretical and practical importance \ldots is a given network structurally susceptible to unfeasibility? What type and what value of injections are most likely to result in unfeasible situations?''} 
Decades later, Ili\'{c} \cite{MI:92} noted that \emph{``A nonlinear network based formulation of the coupled real power/voltage problem is recognized here as an open research problem \ldots the information on network topology could significantly change conservativeness of the results.''} Later still, Hill and Chen \cite{DJH-GC:06} write \emph{``The power systems theory needs to be pushed further in the direction of exploiting structural features of the networks.''} As noted in \cite{PH-CS-SB:10} however, topological information by itself \emph{is not} particularly useful. What \emph{is} useful is the careful fusion of topological information with other parameters, most importantly the sizes and locations of injections. Our formulation accomplishes this by exploiting the stiffness matrices introduced in Part I.

\section{Grid Model and Solvability of the Two-Bus Lossless Power Flow Model}
\label{Sec:TwoBus}

\subsection{Network and AC Power Flow Model}
\label{Sec:Network}

We consider a synchronous AC power network in steady-state. Let $(\mathcal{N},\mathcal{E})$ be the graph describing the network. The set of buses $\mathcal{N}$ is partitioned as $\mathcal{N} = \mathcal{N}_L \cup \mathcal{N}_G$, into $n$ loads (\PQ buses), denoted by $\mathcal{N}_L$, and $m$ generators (\PV buses) denoted by $\mathcal{N}_G$. Each bus has a voltage magnitude $V_i > 0$, a voltage phase angle $\theta_i \in \circle$, and an active (resp. reactive) power injection $P_i \in \real$ ($Q_i \in \real$). At \PV buses $i \in \mathcal{N}_G$, $P_i$ and $V_i$ are fixed, while at \PQ buses $i \in \mathcal{N}_L$, $P_i$ and $Q_i$ are fixed; we assume throughout that $Q_i \leq 0$ for each $i \in \mathcal{N}_L$, which is the most common case of inductive loads.
The set of directed branches $\mathcal{E} \subseteq \mathcal{N} \times \mathcal{N}$ is partitioned accordingly as\footnote{With some abuse of notation, we will write $\vones[\ell\ell]$ for the vector of all ones of length $|\mathcal{E}^{\ell\ell}|$, and similarly for the other sets.}
\begin{equation}\label{Eq:Edges}
\mathcal{E} = \Ell \cup \Egl \cup \Egg\,.
\end{equation}
For example, $\Ell$ contains all branches between the \PQ buses $i,j \in \mathcal{N}_L$. Without loss of generality, all branches $(i,j)\in\mathcal{E}^{g\ell}$ between generators and loads are oriented from generators to loads; other branches are assigned arbitrary directions.
Each branch $(i,j)\in\mathcal{E}$ is purely inductive with a susceptance $b_{ij} < 0$, and the susceptance matrix $B \in \real^{(n+m)\times(n+m)}$ is defined component-wise as $B_{ij} = B_{ji} = -b_{ij}$ for $i \neq j$, and $B_{ii} = \sum_{j=1,j\neq i}^{n+m} b_{ij} + b_{\mathrm{shunt},i}$, where $b_{\mathrm{shunt},i}$ is the shunt susceptance at bus $i\in\mathcal{N}$. In vector notation, $V$, $Q$, and $B$ inherit the bus partitioning $\mathcal{N} = \mathcal{N}_L\cup\mathcal{N}_G$ as
\begin{equation}\label{Eq:Susceptance}
V = \begin{pmatrix} V_L \\ V_G \end{pmatrix}\,,\quad Q = \begin{pmatrix}Q_L \\ Q_G \end{pmatrix}\,,\quad B = \begin{pmatrix}
B_{LL} & B_{LG}\\
B_{GL} & B_{GG}
\end{pmatrix}\,.
\end{equation}
The balance of complex power at each bus leads the lossless power flow equations
\begin{subequations}
\begin{align}\label{Eq:Active}
P_i &= \sum_{j=1}^{n+m} \nolimits V_iV_jB_{ij}\sin(\theta_i-\theta_j)\,,\quad i \in \mathcal{N}_L \cup \mathcal{N}_G\,,\\\label{Eq:Reactive}
Q_i &= -\sum_{j=1}^{n+m} \nolimits V_iV_jB_{ij}\cos(\theta_i-\theta_j)\,,\quad i \in \mathcal{N}_L\,.
\end{align}
\end{subequations}
Reactive power injections $Q_i$ at \PV buses omitted, as they may be determined as ``outputs'' after \eqref{Eq:Active}--\eqref{Eq:Reactive} are solved for the unknowns $\theta = (\theta_1,\ldots,\theta_{n+m})^{\sf T}$ and $V_L = (V_1,\ldots,V_n)^{\sf T}$. Without loss of generality, we assume that $\sum_{i=1}^{n+m} P_i = 0$, which is simply the lossless balance of active power.

\subsection{The Incidence Matrix and Related Constructions}

We denote the incidence matrix of the graph by $A \in \real^{(n+m)\times|\mathcal{E}|}$. If the network is radial (contains no cycles), then $\mathrm{ker}(A) = \emptyset$. In this case, for every vector of active power injections $P = (P_1,\ldots,P_{n+m})^{\sf T}$ satisfying the balance $\sum_{i=1}^{n+m}P_i = 0$, it follows that
\begin{equation}\label{Eq:BranchFlows}
p = (p_{\ell\ell},p_{g\ell},p_{gg})^{\sf T} \triangleq (A^{\sf T}A)^{-1}A^{\sf T}P\,
\end{equation}
is the \emph{unique} vector of branch-wise active power flows, satisfying Kirchhoff's current law $P = Ap$. The branch flows $p$ of inherit the partitioning of the branches, and $A$ inherits the bus and branch partitions and may be written as the block matrix
\begin{equation}\label{Eq:Incidence}
A = \begin{pmatrix}A_L \\ A_G\end{pmatrix} = 
\begin{pmatrix}
A_L^{\ell\ell} & A_L^{g \ell} & \vzeros[]\\
\vzeros[] & A_G^{g \ell} & A_G^{g g}
\end{pmatrix}\,.
\end{equation}
Figure \ref{Fig:Notation} illustrates these conventions on a simple network.
\begin{figure}[ht!]
\begin{center}
\includegraphics[width=1\columnwidth]{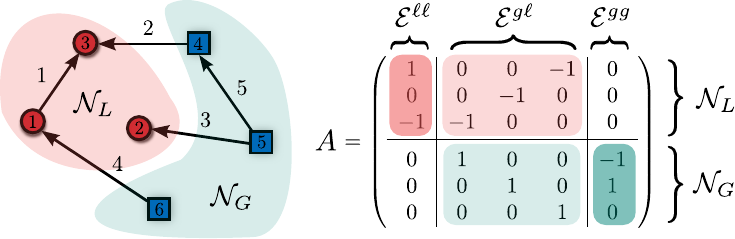}
\caption{Example network showing division of buses and edges with $|\mathcal{N}_G| = 3$ blue \PV buses, $|\mathcal{N}_L| = 3$ red \PQ buses, $|\Ell| = 1$, $|\Egl| = 3$, and $|\Egg| = 1$. Edges $(i,j) \in \mathcal{E}^{g\ell}$ are oriented from \PV buses to \PQ buses, while the orientation of other edges is arbitrary.}
\label{Fig:Notation}
\end{center}
\end{figure}
Since each element of $A$ is either $\pm 1$ or zero, we may write $A$ as the difference two binary matrices $A = A(+)-A(-)$, which have the analogous partitionings
\begin{equation}\label{Eq:IncidencePlusMinus}
A(\pm) = 
\begin{pmatrix}
A_L^{\ell\ell}(\pm) & A_L^{g\ell}(\pm) & \vzeros[]\\
\vzeros[] & A_G^{g \ell}(\pm) & A_G^{gg}(\pm)
\end{pmatrix}
\end{equation} 
The matrix $A(+)$ indexes the buses at the sending end of each branch, while $A(-)$ indexes the corresponding receiving end buses.\footnote{Due to our branch orientation conventions, $A_L^{g\ell}(+) = A_G^{g\ell}(-) = \vzeros[]$.} We also will use an ``unoriented'' version $|A|$ of the incidence matrix $A$, with all non-zero elements set to +1:
\begin{align}\label{Eq:IncidenceAbs}
|A| &= \begin{pmatrix}|A|_L \\ |A|_G\end{pmatrix} = 
\begin{pmatrix}
|A|_{L}^{\ell\ell} & |A|_L^{g\ell} & \vzeros[]\\
\vzeros[] & |A|_G^{g\ell} & |A|^{gg}_G
\end{pmatrix}\,.
\end{align}

\subsection{Solution of Two-Bus Model}
\label{Sec:SolutionTwoBus}

In this section we cover in detail the solution of the canonical two-bus power system model \cite[Chapter 2]{TVC-CV:98}. The insights we gain from a detailed analysis of this problem will help us properly understand our main results, as we will obtain analogous results for networks by analyzing the FPPF. The material here is known, but our treatment is quite non-standard.

The two-bus system depicted in Figure \ref{Fig:Single} consists of a single \PV bus connected to a single \PQ bus. The transmission line has a susceptance $-b < 0$ and the \PQ bus demands a constant complex power $-(P_L+\j Q_L)$.
\begin{figure}[h!]
\begin{center}
\includegraphics[width=0.8\columnwidth]{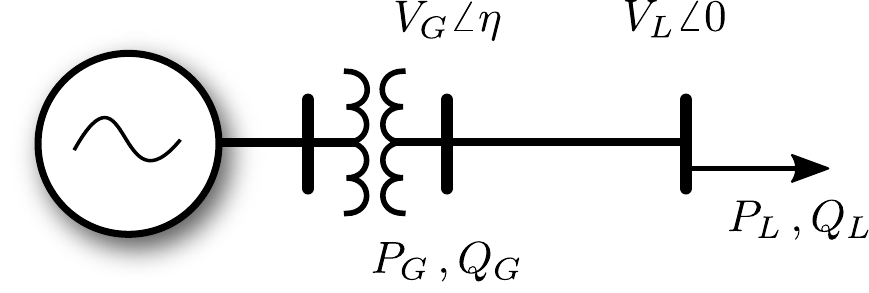}
\caption{Single-line diagram for the two-bus \PV\!\!-\PQ power network, where a generator feeds power to a stiff load through an inductive transmission line.}
\label{Fig:Single}
\end{center}
\end{figure}
In this case, the power flow equations \eqref{Eq:Active}--\eqref{Eq:Reactive} reduce to
\begin{subequations}\label{Eq:TwoNodePowerFlow}
\begin{align}
\label{Eq:SingleLine1}
P_L &= bV_GV_L\sin(-\eta)\\
\label{Eq:SingleLine2}
P_G &= bV_GV_L\sin(\eta)\\
\label{Eq:SingleLine3}
Q_L &= bV_L^2 - bV_LV_G\cos(\eta)\,,
\end{align}
\end{subequations}
where subscripts denote generation and load, respectively, and $\eta$ is the difference in phase angles between the two buses. 
We assume without loss of generality that $P_G = -P_L$; this is a necessary condition for \eqref{Eq:TwoNodePowerFlow} to be solvable (Section \ref{Sec:Network}). The incidence matrix is given by $A = \begin{pmatrix}-1 & 1\end{pmatrix}^{\sf T}$, and since the network is radial, Kirchhoff's current law $\begin{pmatrix}P_L & P_G\end{pmatrix}^{\sf T} = Ap$ has the unique solution $p = P_G$ for the branch active power flow $p$. It follows that $p$ satisfies 
\begin{equation}\label{Eq:SingleLine4}
p = bV_GV_L\sin(\eta)\,.
\end{equation}
We restrict our attention to \eqref{Eq:SingleLine3} and \eqref{Eq:SingleLine4}, which have unknowns $V_L$ and $\eta$. Note that under open-circuit conditions, where $P_G = P_L = Q_L = 0$, inspection yields that the unique open-circuit solution $(\eta^*,V_L^*) \in [-\frac{\pi}{2},\frac{\pi}{2}] \times \real_{>0}$ to \eqref{Eq:SingleLine3}--\eqref{Eq:SingleLine4} is $V_L^* = V_G$ and $\eta^* = 0$.
The first step in solving \eqref{Eq:SingleLine3}--\eqref{Eq:SingleLine4} is to eliminate $\eta$ by squaring both sides of both equations, adding, and using $\sin^2(\eta)+\cos^2(\eta) = 1$ to arrive at the quartic equation
$$
p^2 + (Q_L-bV_L^2)^2 = b^2V_G^2V_L^2\,,
$$
or equivalently
\begin{equation}\label{Eq:QuarticVoltage}
(bV_L^2)^2 - (2Q_Lb+b^2V_G^2)V_L^2 + p^2 + Q_L^2 = 0\,.
\end{equation}
Now introduce the change of variables $v = V_L/V_L^*$, to the scaled voltage variable $v >0$. Inserting this into \eqref{Eq:QuarticVoltage}, collecting terms, and dividing through by constants, one obtains
\begin{equation}\label{Eq:SingleLineIntermediate}
v^4 - v^2\left(1-\frac{\Delta}{2}\right) + \frac{1}{16}\Delta^2 + \Gamma^2 = 0\,,
\end{equation}
where we have introduced the dimensionless active and reactive power variables $\Gamma, \Delta \in \real$ defined by\footnote{While the sign of $\Gamma$ ends up being unimportant, $\Delta$ has been defined such that $\Delta > 0$ corresponds to an inductively loaded network.} 
\begin{equation}\label{Eq:SingleLineParameters}
\Gamma \triangleq \frac{p}{b(V_L^*)^2}\,,\qquad
\Delta \triangleq \frac{Q_L}{-\frac{1}{4}b(V_L^*)^2}\,.
\end{equation}
The nondimensionalized equation \eqref{Eq:SingleLineIntermediate} is quadratic in $v^2$, with solutions
\begin{equation}\label{Eq:SingleLineIntermediate2}
v_{\pm}^2 = \frac{1}{2}\left(1-\frac{\Delta}{2}\pm \sqrt{1-(4\Gamma^2+\Delta)}\right)\,.
\end{equation}
Both solutions take nonnegative real values if and only if 
\begin{equation}\label{Eq:SingleLineCondition}
\boxed{
\Delta + 4\Gamma^2 < 1\,,
}
\end{equation}
in which case the solutions
\begin{equation}\label{Eq:deltasingleline}
v_{\pm} = \sqrt{\frac{1}{2}\left(1-\frac{\Delta}{2}\pm\sqrt{1-\left(4\Gamma^2+\Delta\right)}\right)}\,
\end{equation}
satisfy $v_- \in {[0,\frac{1}{\sqrt{2}})}$ and $v_+ \in {(\frac{1}{2},1]}$, with ordering $v_- < v_+$. Under condition \eqref{Eq:SingleLineCondition} then, it follows that \eqref{Eq:SingleLineIntermediate} possesses a unique solution $v = v_+$ in the high-voltage set $\setdef{v \in \real}{v_+ \leq v \leq 1}$, and in this case the solution lives at the lower boundary of this set. This situation is depicted in Figure \ref{Fig:delta_minus_plus}.

\begin{figure}[!ht]
        \begin{center}
        \begin{subfigure}[!ht]{0.9\columnwidth}
        		\centering
                \includegraphics[height=0.45\columnwidth]{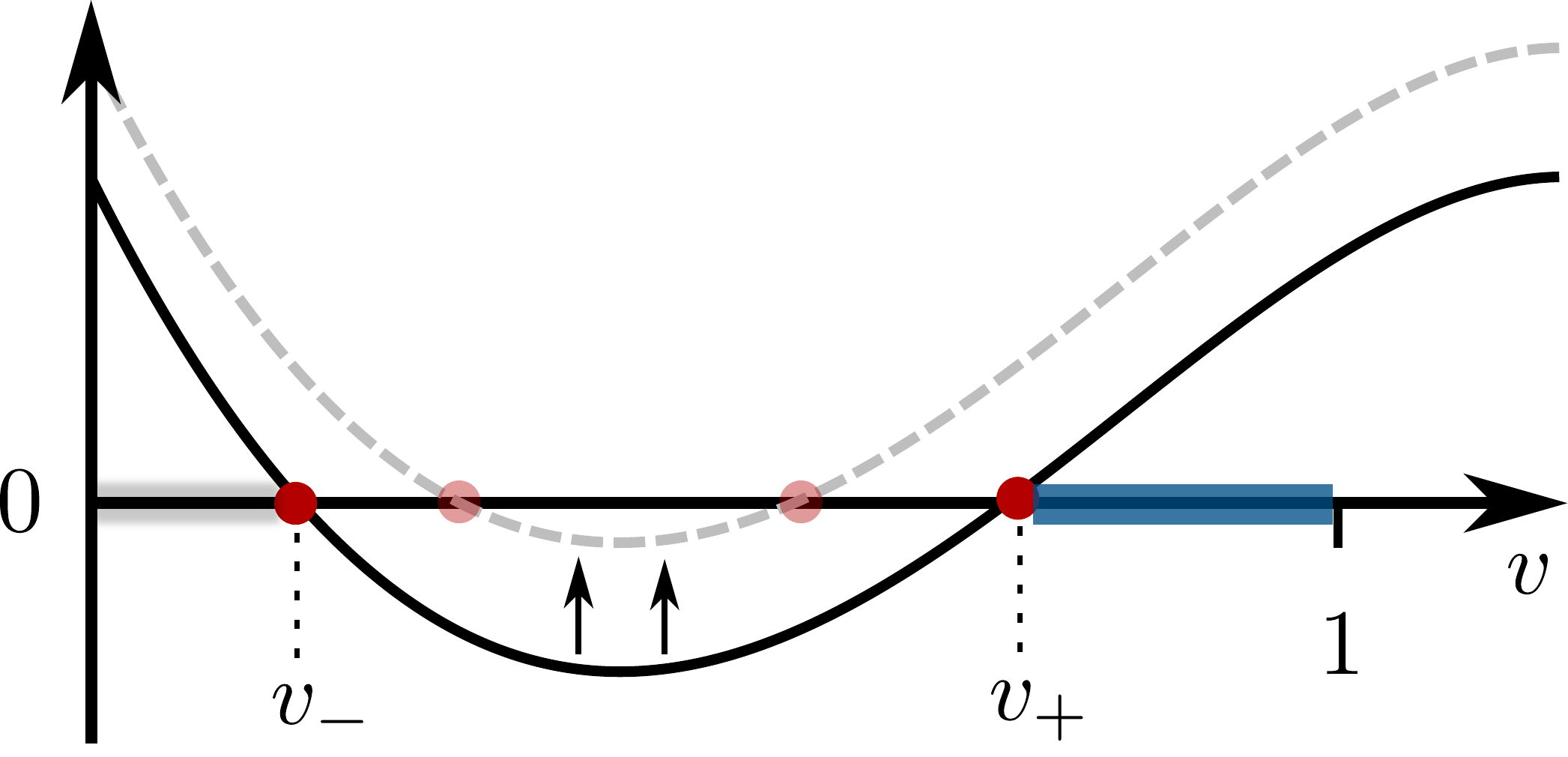}
                \caption{Depiction of curve given by the left-hand side of \eqref{Eq:SingleLineIntermediate}, with two solutions $v_-$ and $v_+$. As either $\Delta$ or $\Gamma$ is increased, the curve shifts up, and the solutions move towards one another.}
                \label{Fig:delta_minus_plus}
        \end{subfigure}
        
        \vspace{1em}
        \begin{subfigure}[!ht]{0.9\columnwidth}
        		\centering
                \includegraphics[height=0.45\columnwidth]{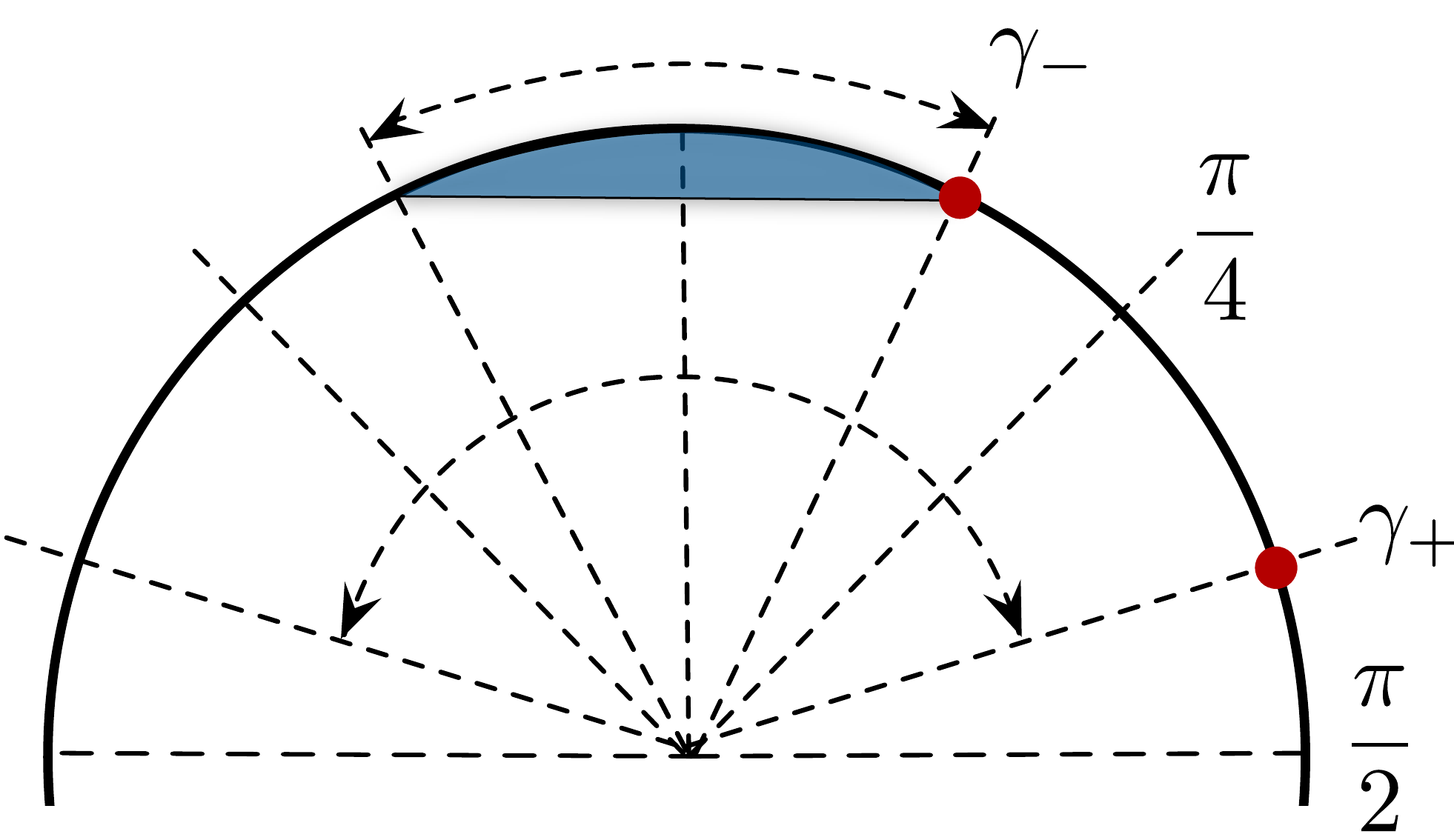}
                \caption{Illustration of solution space for angle difference $\eta$, with two solutions $\gamma_-$ and $\gamma_+$.}
                \label{Fig:gamma_minus_plus}
                \end{subfigure}

        \caption{Illustrations of voltage and angle solution spaces for two-bus \PV\!\!--\PQ power flow model under the solvability condition \eqref{Eq:SingleLineCondition}; (b) is adapted from \cite{FD-FB:12i}. Note that both $v_+$ and $\gamma_-$ are the unique solutions contained in the closure of the blue shaded regions.}
\end{center}
\end{figure}

Having determined the load voltage, we can determine the phase angle difference $\eta$ by rearranging \eqref{Eq:SingleLine4} to read as
$$
\sin(\eta) = \Gamma/v\,.
$$
Solutions are given by $\eta_{\pm} = \mathrm{sign}(p)\, \gamma_{\pm}$, where
\begin{equation}\label{Eq:gammasingleline}
\sin(\gamma_{\pm}) \triangleq \frac{|\Gamma|}{v_{\mp}}\,.
\end{equation}
These solutions satisfy $\gamma_- \in {[0,\frac{\pi}{4})}$ and $\gamma_+ \in {[0,\frac{\pi}{2})}$, with the ordering $\gamma_- < \gamma_+$. In particular then, \eqref{Eq:SingleLine4} possess a unique solution $\eta = \gamma_-$ in the small angle deviation set $\setdef{\eta \in [-\frac{\pi}{2},\frac{\pi}{2}]}{|\eta| \leq \gamma_-}$, and in this case the solution lives on the boundary of this set. This situation is depicted in Figure \ref{Fig:gamma_minus_plus}. A weak version of these results is stated formally now.

\medskip

\begin{proposition}[\bf Two-Bus Power Flow]\label{Prop:SingleLine}
Consider the two-bus power flow model \eqref{Eq:SingleLine1}--\eqref{Eq:SingleLine3} and define the dimensionless active and reactive loading margins $\Gamma$ and $\Delta$ as in \eqref{Eq:SingleLineParameters}. If the voltage stability condition \eqref{Eq:SingleLineCondition} holds, let $v_+ \in (\frac{1}{2},1]$, $\gamma_- \in {[0,\frac{\pi}{4})}$, and $v_- \in {[0,v_+)}$ be defined as in \eqref{Eq:deltasingleline} and \eqref{Eq:gammasingleline}. Then the power flow equations \eqref{Eq:SingleLine1}--\eqref{Eq:SingleLine3}
\begin{enumerate}
\item[(i)]  possess a unique solution $(\eta,V_L) \in [-\frac{\pi}{4},\frac{\pi}{4}] \times \real_{>0}$ with \PQ bus voltage $V_L$ satisfying $V_L/V_L^* \geq v_+$ and angle difference $\eta$ satisfying $|\eta| \leq \gamma_-$

\item[(ii)] possess no solutions satisfying $v_- < V_L/V_L^* < v_+$ and no solutions satisfying $V_L/V_L^* > 1$\,.
\end{enumerate}
\end{proposition}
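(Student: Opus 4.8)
The plan is to exploit the fact that essentially all the analytical work has already been carried out in the development preceding the statement; the proof then amounts to organizing that work into a clean solution-counting argument and verifying the two claimed bounds. First I would observe that solving the full two-bus system \eqref{Eq:SingleLine1}--\eqref{Eq:SingleLine3} is equivalent to solving the reduced pair \eqref{Eq:SingleLine3}--\eqref{Eq:SingleLine4}: under the standing assumption $P_G = -P_L$, equations \eqref{Eq:SingleLine1}--\eqref{Eq:SingleLine2} collapse to the single branch-flow relation \eqref{Eq:SingleLine4}, so it suffices to study \eqref{Eq:SingleLine3}--\eqref{Eq:SingleLine4} in the unknowns $(\eta,V_L)$.

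The conceptual core is to establish a bijection between solutions $(\eta,V_L)$ with $V_L>0$, $\eta\in(-\pi,\pi]$, and the positive roots $v$ of the biquadratic \eqref{Eq:SingleLineIntermediate}. Given any $V_L>0$, equations \eqref{Eq:SingleLine3} and \eqref{Eq:SingleLine4} prescribe $\sin\eta = p/(bV_GV_L)$ and $\cos\eta = (bV_L^2 - Q_L)/(bV_GV_L)$ as explicit functions of $V_L$, which pin down $\eta$ uniquely; conversely, the identity $\sin^2\eta+\cos^2\eta=1$ is exactly what produces the quartic. The point to emphasize is that this makes the squaring step \emph{lossless}: it introduces no spurious roots, so the number of solutions of the system equals the number of positive roots of \eqref{Eq:SingleLineIntermediate}. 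I would also record here that the inductive-load hypothesis $Q_L \le 0$ forces $\cos\eta = (bV_L^2 - Q_L)/(bV_GV_L) > 0$, so every solution automatically satisfies $\eta \in (-\tfrac{\pi}{2},\tfrac{\pi}{2})$.

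With the bijection in hand the counting is immediate from the quadratic-in-$v^2$ analysis already given: under \eqref{Eq:SingleLineCondition} the biquadratic has exactly the two positive roots $v_-<v_+$ of \eqref{Eq:deltasingleline}, with $v_-\in[0,\tfrac{1}{\sqrt2})$ and $v_+\in(\tfrac12,1]$, the associated angles being $\gamma_+$ and $\gamma_-$ via \eqref{Eq:gammasingleline}. Part (ii) then follows with no further work: since $v_\pm\le 1$ there is no root with $v>1$, and since the only roots are $v_-$ and $v_+$ there is none in the open interval $(v_-,v_+)$. For part (i), among the two solutions only $v_+$ lies in the half-line $\{v\ge v_+\}$, which gives existence and uniqueness there; it remains only to confirm that the corresponding angle $\gamma_-$ lies in $[0,\tfrac{\pi}{4})$.

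The one genuinely computational step, and the place I expect the only real friction, is verifying $\gamma_-<\tfrac{\pi}{4}$, equivalently $\sin\gamma_- = |\Gamma|/v_+ < \tfrac{1}{\sqrt2}$, i.e. $2\Gamma^2 < v_+^2$. Writing $s = \sqrt{1 - 4\Gamma^2 - \Delta}\ge 0$ so that $4\Gamma^2 = 1 - \Delta - s^2$ and $v_+^2 = \tfrac12\big(1 - \tfrac{\Delta}{2} + s\big)$, the desired inequality reduces to $s^2 + s + \tfrac{\Delta}{2} > 0$, which holds because $s\ge 0$ and $\Delta \ge 0$ (again the inductive-load hypothesis $Q_L\le 0$); the strictness comes from the fact that $s^2+s+\tfrac{\Delta}{2}=0$ would force the boundary case $4\Gamma^2+\Delta=1$ excluded by the strict condition \eqref{Eq:SingleLineCondition}. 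Combining this with $\cos\eta>0$ from the second paragraph places the high-voltage solution in $[-\tfrac{\pi}{4},\tfrac{\pi}{4}]\times\real_{>0}$ with $|\eta|\le\gamma_-$, completing (i).
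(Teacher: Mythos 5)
Your proof is correct and follows essentially the same route as the paper's own derivation in Section III-C: eliminate $\eta$ by squaring and adding \eqref{Eq:SingleLine3}--\eqref{Eq:SingleLine4}, solve the resulting biquadratic \eqref{Eq:SingleLineIntermediate} for $v_\pm$, and recover the angle from $\sin\eta = \Gamma/v$. Your additions are just welcome rigor that the paper leaves implicit, namely the observation that the squaring step is lossless (so counting solutions reduces exactly to counting positive roots) and the explicit check $s^2+s+\tfrac{\Delta}{2}>0$ establishing $\gamma_-<\tfrac{\pi}{4}$.
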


\medskip

The dimensionless and loading margins $\Gamma$ and $\Delta$ in \eqref{Eq:SingleLineParameters} quantify the stress the network experiences under active and reactive loading, respectively. The numerator of each loading margin is the power demand, while the denominator of each quantifies the ``stiffness'' of the transmission network, in units of power. Roughly speaking then, the solvability condition \eqref{Eq:SingleLineCondition} requires that the network should be strong enough to handle the power transfer demands placed upon it. When \eqref{Eq:SingleLineCondition} holds, we are guaranteed the existence of exactly one solution with a load voltage no lower than $v_+$ and a phase angle difference no larger than $\gamma_-$. The main result of this paper is a generalization of Proposition \ref{Prop:SingleLine} to large networks. Moreover, as visible from Figure \ref{Fig:delta_minus_plus}, there is a medium-voltage region in voltage-space $\setdef{v \in \real}{v_- < v < v_+}$ that is devoid of solutions.

\smallskip

\section{The Fixed-Point Power Flow}
\label{Sec:PartI}

The fixed-point power flow is the main tool we will use to generalize the results of Proposition \ref{Prop:SingleLine}. For a detailed derivation of the fixed point power flow model, the reader is referred to Part I; here we simply state the model.

\subsection{The Power Network Stiffness Matrices}
\label{Sec:Stiffness}

Assuming the block $B_{LL}$ in \eqref{Eq:Susceptance} is negative definite, we define the \emph{open-circuit load voltages} $V_L^* \in \real^{n}_{>0}$ by
\begin{equation}\label{Eq:VLstar}
V_L^* \triangleq -B_{LL}^{-1}B_{LG}V_G\,.
\end{equation}
One may verify that $(\theta,V_L) = (\vzeros[n+m],V_L^*)$ is a solution of \eqref{Eq:Active}--\eqref{Eq:Reactive} with strictly positive voltage magnitudes when $P = \vzeros[n+m]$ and $Q_L = \vzeros[n]$. Using these open-circuit voltages, we define the scaled voltage variables $v = [V_L^*]^{-1}V_L$ and define the $|\mathcal{E}|\times|\mathcal{E}|$ \emph{branch stiffness matrix} $\mathsf{D}$ as the diagonal matrix\footnote{With a slight abuse of notation, in \eqref{Eq:VLstar} we also use $V_i^*$ or $V_j^*$ to denote any fixed \PV bus voltage.}
\begin{equation}\label{Eq:DMatrix}
\mathsf{D} \triangleq [V_i^*V_j^*B_{ij}]_{(i,j)\in\mathcal{E}},\,\, \mathsf{D} = \mathrm{blkdiag}(\mathsf{D}_{\ell\ell},\mathsf{D}_{g\ell},\mathsf{D}_{gg})\,.
\end{equation}
The branch stiffness matrix quantifies the strength of the transmission lines. Finally, using $B_{LL}$ and $V_L^*$ we define the $n \times n$ negative definite \emph{nodal stiffness matrix}\footnote{For a vector $x \in \real^n$, $[x] = \mathrm{diag}(x)$ is the associated diagonal matrix, $\bsin(x) = (\sin(x_1),\ldots,\sin(x_n))^{\sf T}$, and if $x$ is nonnegative, $\sqrt{x} = (\sqrt{x_1},\ldots,\sqrt{x_n})^{\sf T}$. The vector $\vones[n]$ is the vector of all ones of dimension $n$.}
\begin{equation}\label{Eq:Qcrit}
\mathsf{S} \triangleq \frac{1}{4}[V_L^*]B_{LL}[V_L^*]\,.
\end{equation}
The nodal stiffness matrix quantifies the strength of the transmission network between \PQ buses. The stiffness matrices \eqref{Eq:DMatrix} and \eqref{Eq:Qcrit} directly generalize the denominators in \eqref{Eq:SingleLineParameters}.

\subsection{Fixed Point Power Flow Model for Radial Networks}
\label{Sec:FPPF}

To compress our notation, define the function $\map{h}{\real^n_{>0}}{\real^{|\mathcal{E}|}}$ component-wise for any edge $e \sim (i,j) \in \mathcal{E}$ by
\begin{equation}\label{Eq:VComponentForm}
h_{e}(v) =
  \begin{cases}
   v_iv_j & \text{if } e\sim(i,j) \in \mathcal{E}^{\ell \ell}\,\\
   v_j       & \text{if } e\sim(i,j) \in \mathcal{E}^{g \ell}\,\\
   1 & \text{if } e\sim(i,j) \in \mathcal{E}^{gg}\,
  \end{cases}\,.
\end{equation}
In vector form, $h(v) \in \real^{|\mathcal{E}|}$ inherits the branch partitioning 
$$
h(v) = (h_{\ell\ell}(v),h_{g\ell}(v),h_{gg}(v))^{\sf T}
$$ 
of the branches. The following result is \cite[Corollary 3.7]{JWSP:17a}.

\smallskip

\begin{theorem}\label{Thm:FPPFAcyclic}\textbf{(Fixed Point Power Flow for Radial Networks)}
Consider the coupled flow equations \eqref{Eq:Active}--\eqref{Eq:Reactive} and assume that the graph $(\mathcal{N},\mathcal{E})$ describing the network is radial. The following two statements are equivalent:
\begin{enumerate}\setlength{\itemsep}{1.5pt}
\item[(i)] $(\theta,V_L) \in \Theta(\pi/2) \times \real^{n}_{>0}$ is a solution of \eqref{Eq:Active}--\eqref{Eq:Reactive};
\item[(ii)] $v \in \real^n_{>0}$ is a fixed point of the mapping $\map{f}{\real_{>0}^n}{\real^n}$ defined by
\begin{equation}\label{Eq:FixedPoint}
\begin{aligned}
f(v) &\triangleq \vones[n] -\frac{1}{4}\mathsf{S}^{-1}[Q_L][v]^{-1}\vones[n] + \frac{1}{4}\mathsf{S}^{-1}|A|_L^{g\ell}\mathsf{D}_{g\ell}u_{g\ell}(v)\\
&\quad + \frac{1}{4}\mathsf{S}^{-1}A_L^{\ell\ell}(+)\,[A_L^{\ell\ell}(-)^{\sf T}v]\mathsf{D}_{\ell\ell}u_{\ell\ell}(v)\\
&\quad + \frac{1}{4}\mathsf{S}^{-1}A_L^{\ell\ell}(-)\,[A_L^{\ell\ell}(+)^{\sf T}v]\mathsf{D}_{\ell\ell}u_{\ell\ell}(v)\,,
\end{aligned}
\end{equation}
where
\begin{subequations}
\begin{align}
\label{Eq:ull}
u_{\ell\ell}(v) &= \vones[\ell\ell] - \sqrt{\vones[\ell\ell] - [h_{\ell\ell}(v)]^{-2}\mathsf{D}_{\ell\ell}^{-2}[p_{\ell\ell}]p_{\ell\ell}}\,,\\
\label{Eq:ugl}
u_{g\ell}(v) &= \vones[g\ell] - \sqrt{\vones[g\ell] - [h_{g\ell}(v)]^{-2}\mathsf{D}_{g\ell}^{-2}[p_{g\ell}]p_{g\ell}}\,,
\end{align}
\end{subequations}
and $p_{\ell\ell},p_{g\ell}$ are the branch flows given by KCL in \eqref{Eq:BranchFlows}. The phase angle differences $\eta = A^{\sf T}\theta \in [-\frac{\pi}{2},\frac{\pi}{2}]^{|\mathcal{E}|}$ are determined uniquely by 
\begin{equation}
\label{Eq:psix}
\bsin(\eta) = [h(v)]^{-1}\mathsf{D}^{-1}p\,.
\end{equation}
\end{enumerate}
\end{theorem}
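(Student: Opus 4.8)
The plan is to prove the equivalence by eliminating the phase angles from the nodal equations \eqref{Eq:Active}--\eqref{Eq:Reactive} and reducing them to the fixed-point form, with radiality used to make the branch flows unique. I would treat the active and reactive equations separately. For the active equations \eqref{Eq:Active}, define the branch active flow $p_e \triangleq V_iV_jB_{ij}\sin(\theta_i-\theta_j)$ on each oriented edge $e\sim(i,j)$; then \eqref{Eq:Active} is precisely Kirchhoff's current law $P = Ap$. Since the network is radial, $A$ has trivial kernel and $p$ is the unique vector \eqref{Eq:BranchFlows}. Substituting $V_i=v_iV_i^{*}$ and comparing with \eqref{Eq:DMatrix}--\eqref{Eq:VComponentForm} gives $p_e=\mathsf{D}_e\,h_e(v)\sin\eta_e$, that is $\bsin(\eta)=[h(v)]^{-1}\mathsf{D}^{-1}p$, which is \eqref{Eq:psix}. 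Restricting $\theta$ to $\Theta(\pi/2)$ forces each $\eta_e\in[-\tfrac{\pi}{2},\tfrac{\pi}{2}]$, where $\arcsin$ is a bijection and $\cos\eta_e=\sqrt{1-\sin^2\eta_e}\ge 0$; hence $1-\cos\eta_e$ coincides with the branch quantities $u_{\ell\ell},u_{g\ell}$ of \eqref{Eq:ull}--\eqref{Eq:ugl} once $\sin^2\eta_e=p_e^2/(\mathsf{D}_e h_e(v))^2$ is inserted.

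The crux is the reactive equation \eqref{Eq:Reactive}. I would split each sum into its diagonal term and its off-diagonal neighbours, write $\cos\eta_{ij}=1-(1-\cos\eta_{ij})$, and separately collect the ``$\cos\to 1$'' contributions. For a load $i$ these give $(\mathrm{I})_i=-V_i^2B_{ii}-\sum_{j\neq i}V_iV_jB_{ij}=-V_i(BV)_i$. Here the open-circuit identity built into \eqref{Eq:VLstar}, namely $B_{LL}V_L^{*}+B_{LG}V_G=\vzeros[n]$, is the key: recalling that generator voltages are fixed at their set-points, it yields $(BV)_i=(B_{LL}[V_L^{*}](v-\vones[n]))_i$, so that $(\mathrm{I})_i=-v_i\,([V_L^{*}]B_{LL}[V_L^{*}](v-\vones[n]))_i=-4v_i(\mathsf{S}(v-\vones[n]))_i$ by the definition \eqref{Eq:Qcrit} of $\mathsf{S}$. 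The surviving $(1-\cos)$-terms are $(\mathrm{II})_i=\sum_{e\ni i}\mathsf{D}_e h_e(v)\,u_e$, whose incidence structure is recorded by $A_L^{\ell\ell}(\pm)$ for $\ell\ell$-edges and $|A|_L^{g\ell}$ for $g\ell$-edges. Every surviving term carries a common factor $v_i$ (the load voltage at bus $i$): using $V_iV_jB_{ij}=\mathsf{D}_e h_e(v)$, an $\ell\ell$-edge contributes $v_i\,v_{\mathrm{other}}\mathsf{D}_e u_e$ and a $g\ell$-edge contributes $v_i\,\mathsf{D}_e u_e$. Dividing \eqref{Eq:Reactive} by $v_i$ turns it into $[v]^{-1}Q_L=-4\mathsf{S}(v-\vones[n])+(\mathrm{II}')$, and isolating $v$ gives exactly $v=f(v)$ with $f$ as in \eqref{Eq:FixedPoint}.

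I expect this last bookkeeping to be the main obstacle. The delicate points are confirming $V_iV_jB_{ij}=\mathsf{D}_e h_e(v)$ uniformly across the $\ell\ell$ and $g\ell$ cases, checking that the pair $A_L^{\ell\ell}(+)[A_L^{\ell\ell}(-)^{\transpose}v]+A_L^{\ell\ell}(-)[A_L^{\ell\ell}(+)^{\transpose}v]$ really reproduces $\sum_{e\ni i}v_{\mathrm{other}}\mathsf{D}_e u_e$ over $\ell\ell$-edges (one term per end of each edge), and verifying that the diagonal together with the generator-neighbour terms collapse onto $\mathsf{S}$ precisely through the open-circuit relation. None of this is conceptually hard, but the index/orientation accounting is where errors would creep in.

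Both directions then follow from the same identities. The implication (i)$\Rightarrow$(ii) simply reads the computation forward. For (ii)$\Rightarrow$(i), I would start from $v\in\real^n_{>0}$ with $f(v)=v$, define $\eta$ by \eqref{Eq:psix} with $\eta_e=\arcsin\!\big(p_e/(\mathsf{D}_e h_e(v))\big)\in[-\tfrac{\pi}{2},\tfrac{\pi}{2}]$ — well posed because $v$ lies in the domain of $f$, so each radicand in \eqref{Eq:ull}--\eqref{Eq:ugl} is nonnegative and hence $|\sin\eta_e|\le 1$ — recover $\theta$ up to a global phase shift from $\eta=A^{\transpose}\theta$ on the connected radial graph, and run the algebra backward: KCL $P=Ap$ reconstructs \eqref{Eq:Active}, while reversing the reactive reduction recovers \eqref{Eq:Reactive}.
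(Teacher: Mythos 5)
Your derivation is correct, and it is essentially the intended one: the paper does not prove Theorem \ref{Thm:FPPFAcyclic} here but quotes it from Part I \cite[Corollary 3.7]{JWSP:17a}, and your reconstruction (branch flows made unique by radiality, the identity $V_iV_jB_{ij}=\mathsf{D}_e h_e(v)$, the $\cos\eta_e = 1-(1-\cos\eta_e)$ split with the open-circuit relation $B_{LL}V_L^*+B_{LG}V_G=\vzeros[n]$ collapsing the linear part onto $\mathsf{S}$, and division by the common factor $v_i$) is precisely the derivation of the FPPF that Part I carries out and that the two-bus analysis of Section \ref{Sec:SolutionTwoBus} templates. The incidence bookkeeping you flag as delicate does check out: $A_L^{\ell\ell}(+)[A_L^{\ell\ell}(-)^{\transpose}v]+A_L^{\ell\ell}(-)[A_L^{\ell\ell}(+)^{\transpose}v]$ assigns to each load bus exactly one term per incident $\ell\ell$-edge weighted by the opposite endpoint's scaled voltage, as required.
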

In other words, for radial networks the model \eqref{Eq:FixedPoint}--\eqref{Eq:psix} is equivalent to the model \eqref{Eq:Active}--\eqref{Eq:Reactive}; see \cite[Theorem 5.5]{JWSP:17a} for the more general meshed case.

\section{Main Results: Existence and Uniqueness of a Power Flow Solution}
\label{Sec:MainResults}

We now present our main results, giving sufficient and tight conditions for power flow solvability. In Section \ref{Sec:MainResults1} we treat the restricted case where there are no branches in the network between \PQ buses. In this case we can state quite strong results: we establish conditions which guarantee the existence of a solution in a specified set, its uniqueness within that set, necessity of the conditions, and the existence of a medium-voltage regime devoid of solutions. These results completely generalize the results for the two-bus model (Section \ref{Sec:SolutionTwoBus}).

In Section \ref{Sec:MainResults2}, we allow for general radial networks, including branches between \PQ buses; note that such branches are absent in the two-bus model. The analysis in this case proves to be much more challenging, and we give only conservative sufficient conditions for solution existence.

\subsection{Main Results 1: No Branches Between \PQ Buses}
\label{Sec:MainResults1}

We first restrict ourselves to networks where \PQ buses are not connected to one another; two example networks are shown in Figure \ref{Fig:example-network-1}. We allow for \PQ buses to have multiple \PV bus neighbors, as in Figure \ref{Fig:example-network-1a}, and denote by {\PV\!\!$(i) \subset \mathcal{N}_G$} the set of \PV buses connected to the $i$th \PQ bus:
$$
\PV(i) \triangleq \setdef{k \in \mathcal{N}_G}{(k,i)\in\mathcal{E}^{g\ell}}\,.
$$
There are no restrictions on the connections between \PV buses, other than that the network be radial.
\begin{figure}[!ht]
        \begin{center}
        \begin{subfigure}[!ht]{0.45\columnwidth}
        		\centering
                \includegraphics[height=0.35\columnwidth]{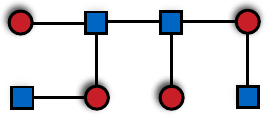}
                \caption{}
                \label{Fig:example-network-1a}
        \end{subfigure}~        
        \begin{subfigure}[!ht]{0.45\columnwidth}
        		\centering
                \includegraphics[height=0.35\columnwidth]{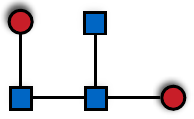}
                \caption{}
                \label{Fig:example-network-1b}
                \end{subfigure}
		\caption[]{Radial networks with no connections between \PQ buses. In (a), each \PQ bus ({\tikz\draw[black,semithick,fill=red] (0,0) circle (.5ex);}) 
		is allowed to have multiple \PV bus ({\tikz{\path[draw=black,semithick,fill={rgb:red,0;green,102;blue,196}] (0,0) rectangle (0.12cm,0.12cm);}}) neighbors. In (b), each \PQ bus has only one \PV bus neighbor.}
		\label{Fig:example-network-1}
\end{center}
\end{figure}
Under this assumption, $\mathcal{E}^{\ell\ell} = \emptyset$, and the final two terms in the FPPF \eqref{Eq:FixedPoint} are discarded. 
In this case, the grounded susceptance matrix $B_{LL}$ in \eqref{Eq:Susceptance} is a diagonal matrix, and hence so is the nodal stiffness matrix $\mathsf{S}$ in \eqref{Eq:Qcrit}, with strictly negative diagonal elements
\begin{align*}
\mathsf{S}_{ii} &= B_{ii}(V_i^*)^2 /4\,,
\end{align*}
where $B_{ii} = -\sum_{j\in\PV(i)} \nolimits B_{ij} + b_{\mathrm{shunt},i}$. Therefore, $\mathsf{S}_{ii}$ measures how strongly \PQ bus $i \in \mathcal{N}_L$ is connected to the neighboring generator buses $\PV(i)$.

\smallskip

\begin{theorem}\label{Thm:Main1}\textbf{(Sufficient Conditions for Solvability of Lossless Power Flow on Radial Networks I)}
Consider the lossless power flow equations \eqref{Eq:Active}--\eqref{Eq:Reactive}. Assume the network $(\mathcal{N},\mathcal{E})$ is radial, and that there are no branches between \PQ buses. Let the branch and nodal stiffness matrices $\mathsf{D}$ and $\mathsf{S}$ be as in \eqref{Eq:DMatrix} and \eqref{Eq:Qcrit} respectively, with the branch-wise active power flows {\tb $p = A^{\dagger}P$} as in \eqref{Eq:BranchFlows}.
Define the nodal and branch stress measures
\begin{subequations}
\begin{align}\label{Eq:Delta1}
\Delta_i &\triangleq Q_i/\mathsf{S}_{ii}\,, &\quad i &\in \mathcal{N}_L\\
\label{Eq:Gamma1gl}
\Gamma_i &\triangleq \max_{j\in\PV(i)}\nolimits |p_{ji}|/\mathsf{D}_{ji}\,, &\quad i &\in \mathcal{N}_L\\
\label{Eq:Gamma1gg}
\Gamma_{ij} &\triangleq |p_{ij}|/\mathsf{D}_{ij}\,, &\quad (i,j) &\in \mathcal{E}^{gg}\,.
\end{align}
\end{subequations}
If the above quantities satisfy
\begin{subequations}\label{Eq:Conditions1}
\begin{align}
\label{Eq:Omega1}
\max_{i\in\mathcal{N}_L}\,\,\, \Delta_i + 4\Gamma_{i}^2 &< 1\,,\\
\label{Eq:Gamma1}
\max_{(i,j)\in\mathcal{E}^{gg}} \Gamma_{ij} &< 1\,,
\end{align}
\end{subequations}
then the power flow equations \eqref{Eq:Active}--\eqref{Eq:Reactive} possess a unique  solution $(\theta,V_L)$, with \PQ bus voltages $V_i$ and power angles $\eta_{ij} = \theta_i-\theta_j$ satisfying the bounds
\begin{subequations}\label{Eq:SolutionBoundsMain1}
\begin{align}
{\tb v_{i,+} \leq V_i/V_i^*} &{\tb \leq 1}\,, &\quad i &\in \mathcal{N}_L\\
|\eta_{ji}| &\leq \gamma_i\,, &\quad (j,i) &\in \mathcal{E}^{g\ell}\\
|\eta_{ij}| &\leq \gamma_{ij}\,, &\quad (i,j) &\in \mathcal{E}^{gg}
\end{align}
\end{subequations}
where $v_{i,+} \in (\frac{1}{2},1]$, $v_{i,-} \in [0,\frac{1}{\sqrt{2}})$, $\gamma_i \in [0,\frac{\pi}{4})$ and $\gamma_{ij} \in [0,\frac{\pi}{2})$ are defined by
\begin{subequations}\label{Eq:DefinedQuantities}
\begin{align}
\label{Eq:delta1}
v_{i,\pm} &\triangleq \sqrt{\frac{1}{2}\left(1-\frac{\Delta_i}{2} \pm \sqrt{1-(\Delta_i+4\Gamma_{i}^2)}\right)}\,,\\
\label{Eq:gamma1}
\sin(\gamma_{i}) &\triangleq \frac{\Gamma_{i}}{v_{i,+}}\,,\quad \sin(\gamma_{ij}) \triangleq \Gamma_{ij}\,.
\end{align}
\end{subequations}
Moreover, the following statements hold:
\begin{enumerate}\setlength{\itemsep}{1.5pt}
\item[1)] \textbf{{No Medium-Voltage or Extra High-Voltage Solutions:}} There exist no solutions to \eqref{Eq:Active}--\eqref{Eq:Reactive} with voltage magnitudes $V_i$ satisfying
\begin{align*}
v_{i,-} &< V_i/V_i^* < v_{i,+}\,\qquad \text{or}\,\qquad V_i/V_i^* > 1\,
\end{align*}
for any $i \in \mathcal{N}_L$\,.
\item[2)] \textbf{{Necessity of Conditions:}} If $|\PV(i)| = 1$ for each $i \in \mathcal{N}_L$, meaning every \PQ bus has exactly one neighboring \PV bus, then the conditions \eqref{Eq:Omega1}--\eqref{Eq:Gamma1} are necessary and sufficient. Alternatively, if the network is loaded according to any of the loading profiles
\begin{enumerate}\setlength{\itemsep}{1.5pt}
\item[(i)] $Q_L = \alpha \mathsf{S}\vones[n]$ and $P = \vzeros[n+m]$
\item[(ii)] $Q_L = \vzeros[n]$ and $P = \frac{\alpha}{2} \,A \cdot (\mathsf{D}_{g\ell}\vones[g\ell],\vzeros[gg])$
\item[(iii)] $Q_L = \vzeros[n]$ and $P = \alpha\, A\cdot (\vzeros[g\ell],\mathsf{D}_{gg}\vones[gg])$
\end{enumerate}
where $\alpha \in [0,1)$, then the conditions \eqref{Eq:Omega1}--\eqref{Eq:Gamma1} are necessary and sufficient as functions of $\alpha$. In either case, the unique high-voltage solution satisfies the bounds \eqref{Eq:SolutionBoundsMain1} with equality sign.
\end{enumerate}
\end{theorem}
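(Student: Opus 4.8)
The plan is to reduce everything to the two-bus analysis of Proposition~\ref{Prop:SingleLine} by exploiting the fact that, with no branches between \PQ buses, the fixed-point map $f$ of \eqref{Eq:FixedPoint} \emph{decouples} completely. By Theorem~\ref{Thm:FPPFAcyclic} it suffices to locate the positive fixed points of $f$. First I would note that $\mathsf{S}$ is diagonal and the $\ell\ell$-terms in \eqref{Eq:FixedPoint} are absent, and that for $(k,i)\in\mathcal{E}^{g\ell}$ the quantity $h_{ki}(v)=v_i$ depends only on $v_i$; hence each component satisfies $f_i(v)=\phi_i(v_i)$ for a scalar function $\phi_i$. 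Using $\sum_{k\in\PV(i)}\mathsf{D}_{ki}=-4\mathsf{S}_{ii}$, the fixed-point condition $v_i=\phi_i(v_i)$ rearranges to the scalar equation
\[
\Delta_i + 4v_i^2 = v_i \sum_{k\in\PV(i)} c_{ki}\,\sqrt{1 - \Gamma_{ki}^2/v_i^2}\,, \qquad c_{ki} \triangleq -\mathsf{D}_{ki}/\mathsf{S}_{ii} > 0\,,
\]
valid for $v_i\geq\Gamma_i$, where $\Gamma_{ki}\triangleq|p_{ki}|/\mathsf{D}_{ki}$, $\Gamma_i=\max_k\Gamma_{ki}$, and crucially $\sum_k c_{ki}=4$. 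Writing $F_i(v)\triangleq\sum_k c_{ki}\sqrt{v^2-\Gamma_{ki}^2}-4v^2$, the problem becomes $F_i(v_i)=\Delta_i$; when $|\PV(i)|=1$ this is precisely the two-bus quartic \eqref{Eq:SingleLineIntermediate}, whose roots are $v_{i,\pm}$ from \eqref{Eq:delta1}.

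The heart of the argument rests on two facts. First, since $\Gamma_{ki}\leq\Gamma_i$, termwise monotonicity of $\sqrt{\cdot}$ together with $\sum_k c_{ki}=4$ yields the pointwise comparison $F_i(v)\geq F_i^{2}(v)$, where $F_i^{2}(v)\triangleq 4\sqrt{v^2-\Gamma_i^2}-4v^2$ is the two-bus profile for parameters $(\Gamma_i,\Delta_i)$. Second, $F_i'(v)=v\big(\sum_k c_{ki}(v^2-\Gamma_{ki}^2)^{-1/2}-8\big)$ shows $F_i$ is unimodal, and the same comparison places its maximizer $v_M$ at or below the two-bus maximizer $v_m=\sqrt{1/4+\Gamma_i^2}\leq v_{i,+}$. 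Given these, I would conclude existence as follows: $F_i(v_{i,+})\geq F_i^{2}(v_{i,+})=\Delta_i$ (as $v_{i,+}$ solves $F_i^2=\Delta_i$), while $F_i(1)\leq\sum_k c_{ki}-4=0\leq\Delta_i$ since $Q_i\leq0$ forces $\Delta_i\geq0$. Because $F_i$ is strictly decreasing on $[v_M,\infty)\supseteq[v_{i,+},1]$, the intermediate value theorem delivers a unique root $v_i^\star\in[v_{i,+},1]$; this gives existence, uniqueness within the high-voltage box, and the voltage bound in \eqref{Eq:SolutionBoundsMain1}.

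For the solutionless regions and angles, I would use unimodality: the superlevel set $\{F_i\geq\Delta_i\}$ is a single interval $[a,b]$, and both $v_{i,-}$ and $v_{i,+}$ lie in it (again via $F_i\geq F_i^2$ and the fact that $v_{i,\pm}$ are the two roots of $F_i^2=\Delta_i$), so $[v_{i,-},v_{i,+}]\subseteq[a,b]$ and $F_i>\Delta_i$ strictly on $(v_{i,-},v_{i,+})$ --- no medium-voltage solution; monotone decrease past $v_{i,+}$ with $F_i(1)\leq\Delta_i$ rules out $v_i>1$. The phase differences then follow from \eqref{Eq:psix}: for $(k,i)\in\mathcal{E}^{g\ell}$ one has $|\sin\eta_{ki}|=\Gamma_{ki}/v_i^\star\leq\Gamma_i/v_{i,+}=\sin\gamma_i$, giving the angle bound in \eqref{Eq:SolutionBoundsMain1}; for $(i,j)\in\mathcal{E}^{gg}$, $h_{ij}\equiv1$ forces $\sin\eta_{ij}=\Gamma_{ij}$, which is solvable in $(-\tfrac{\pi}{2},\tfrac{\pi}{2})$ exactly when $\Gamma_{ij}<1$ --- recovering condition \eqref{Eq:Gamma1} and $|\eta_{ij}|=\gamma_{ij}$.

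For necessity, I would observe that when $|\PV(i)|=1$ the comparison is an equality, so $F_i\equiv F_i^2$ and Proposition~\ref{Prop:SingleLine} applies verbatim bus-by-bus, forcing \eqref{Eq:Omega1} to be tight and the bounds \eqref{Eq:SolutionBoundsMain1} to hold with equality; the three loading profiles (i)--(iii) are engineered so that either all incident $\Gamma_{ki}$ coincide (with $p$ proportional to $\mathsf{D}$ on the relevant edges) or $Q_L\propto\mathsf{S}\vones[n]$, which likewise collapses the comparison to an equality and makes \eqref{Eq:Omega1}--\eqref{Eq:Gamma1} necessary and sufficient as functions of $\alpha$. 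I expect the single genuinely delicate step to be establishing unimodality of $F_i$ together with the maximizer ordering $v_M\leq v_m\leq v_{i,+}$, since this is exactly what certifies that the worst-edge margin $\Gamma_i=\max_k\Gamma_{ki}$ suffices even though the true equation mixes all incident edges; once that is secured, every remaining claim reduces to the scalar two-bus geometry of Proposition~\ref{Prop:SingleLine} via elementary intermediate-value and monotonicity arguments.
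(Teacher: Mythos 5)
Your proposal is correct, and its core rests on the same two structural observations as the paper's proof: the componentwise decoupling $f_i(v)=\phi_i(v_i)$ when $\mathcal{E}^{\ell\ell}=\emptyset$, and the normalization $\sum_{k\in\PV(i)}(-\mathsf{D}_{ki}/\mathsf{S}_{ii})=4$, which is exactly the row-stochasticity of $N=-\tfrac{1}{4}\mathsf{S}^{-1}|A|_L^{g\ell}\mathsf{D}_{g\ell}$ established in Lemma~\ref{Lem:RowStochastic}. From there, however, you take a genuinely different route. The paper stays in fixed-point-theoretic territory: it shows the interval $[-\delta_{i,-},0]$ (in the shifted variable $x_i=v_i-1$) is invariant, invokes Brouwer for existence, bounds $|\mathrm{d}f_i/\mathrm{d}x_i|$ by a quantity $\beta(\Delta_i,\Gamma_i)$ shown to equal $1$ exactly on the boundary $\Delta_i+4\Gamma_i^2=1$, and invokes Banach for uniqueness; the medium-voltage exclusion comes from the strict-invariance statement \eqref{Eq:Invariance2}. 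You instead clear denominators to get the explicit scalar equation $F_i(v_i)=\Delta_i$ with $F_i(v)=\sum_k c_{ki}\sqrt{v^2-\Gamma_{ki}^2}-4v^2$, compare it termwise with the worst-edge two-bus profile $F_i^{2}$, and finish with unimodality and the intermediate value theorem. Both the comparison $F_i\geq F_i^{2}$ and the maximizer ordering $v_M\leq v_m\leq v_{i,+}$ check out (the latter reduces to $s^2\leq 2s$ with $s=\sqrt{1-(\Delta_i+4\Gamma_i^2)}\in[0,1]$), and your superlevel-set argument delivers the medium-voltage and extra-high-voltage exclusions quite cleanly. The trade-off is that the paper's contraction argument buys something your IVT argument does not: it proves $f_i$ is a contraction with rate $\beta<1$ precisely under \eqref{Eq:Omega1}, which is what underwrites the convergence of the FPPF iteration $v_{k+1}=f(v_k)$ emphasized in Remark~\ref{Rem:FPPFConvergence}; your approach certifies existence and uniqueness of the root but says nothing about the iteration. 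Conversely, your route is more elementary (no Brouwer or Banach needed for a scalar equation) and makes the tightness mechanism transparent: the comparison collapses to an equality exactly when all incident $\Gamma_{ki}$ coincide or when $|\PV(i)|=1$, which is precisely how the paper's loading profiles (i)--(iii) are engineered (for (ii), $p_{g\ell}=\tfrac{\alpha}{2}\mathsf{D}_{g\ell}\vones[g\ell]$ makes every $\Gamma_{ki}=\alpha/2$, and the two roots coalesce at $\alpha=1$, matching the saddle-node argument in the paper's supplementary proof). Your necessity discussion is sketchier than the rest and asserts rather than demonstrates what fails when the condition is violated, but it matches the paper's own level of detail for that step.
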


The statement of Theorem \ref{Thm:Main1} can feel intimidating, so we now spend some time walking the reader through it. The branch and nodal stiffness matrices $\mathsf{D}$ and $\mathsf{S}$ quantify the strength of the grid, in units of power.
In the case considered here, $\mathsf{S}$ is diagonal, and $\Delta_i = Q_i/\mathsf{S}_{ii}$ is interpreted as the stress experienced by \PQ bus $i \in \mathcal{N}_L$ due to reactive loading. Similarly, the ratio $\Gamma_{ji} = |p_{ji}|/\mathsf{D}_{ji}$ is the stress experienced by branch $(j,i)$ due to active power flow. Therefore $\Gamma_i$ in \eqref{Eq:Gamma1gl} identifies the most stressed branch incident to \PQ bus $i$. The quantities \eqref{Eq:Delta1}--\eqref{Eq:Gamma1gg} generalize the two-bus definitions \eqref{Eq:SingleLineParameters}.

The conditions \eqref{Eq:Conditions1} are ``low stress'' conditions; if the stress is not too great, the network has a high-voltage solution. Roughly speaking, \eqref{Eq:Omega1} is a voltage stability condition for \PQ buses, and limits the maximum downward pressure on \PQ bus voltage magnitudes; this is a generalization of the two-bus condition \eqref{Eq:SingleLineCondition}, and the set for a particular bus $i$ is depicted in Figure \ref{Fig:P1set}. Analogously, \eqref{Eq:Gamma1} is an angle stability condition for \PV buses, which limits the power flowing between generators. No such condition appeared for the two-bus case, as the two-bus case does not have multiple \PV buses. Together, the conditions \eqref{Eq:Conditions1} implicitly define a set of feasible power injections in the space of power variables. {\tb The ``bus-by-bus'' nature of the condition \eqref{Eq:Omega1} is not an accident. Indeed, the key to the proof is that for this class of networks, the FPPF decouples into $n$ independent scalar fixed-point equations, each of which can be analyzed independently.}

\begin{figure}[h!]
\begin{center}
\includegraphics[width=0.6\columnwidth]{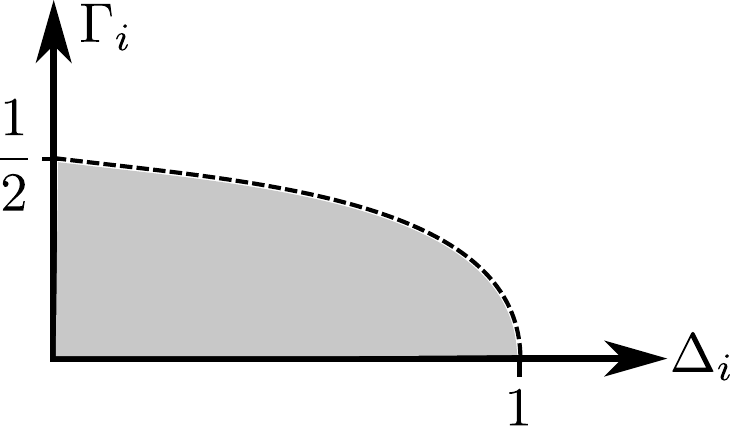}
\caption{Depiction of allowable set specified by \eqref{Eq:Omega1}.}
\label{Fig:P1set}
\end{center}
\end{figure}

The quantities $v_{i,+}$, $\gamma_{i}$, and $\gamma_{ij}$ give us extremely specific information on where this unique solution is in voltage and angle space. Every normalized \PQ bus voltage $v_i = V_i/V_i^*$ is guaranteed to be above $v_{i,+}$, while the phase angle difference $|\eta_{ji}|$ between \PQ bus $i\in\mathcal{N}_L$ to any neighboring \PV bus $j \in \mathcal{N}_G$ is guaranteed to be less than $\gamma_i < \frac{\pi}{4}$. Similarly, $\gamma_{ij} < \frac{\pi}{2}$ bounds the power angles between connected \PV buses $i,j \in \mathcal{N}_G$. Note that while angles between \PV buses may reach as high as 90$^{\circ}$, angles between \PV and \PQ buses are \emph{always} less than 45$^{\circ}$. The expression \eqref{Eq:delta1} shows precisely how active and reactive power injections (quantified through $\Delta_i$ and $\Gamma_i$) influence voltage magnitudes. For example, \eqref{Eq:delta1} could be used to generate estimates for PV and PQ curves.

Graphically, the results of Theorem \ref{Thm:Main1} are shown in Figure \ref{Fig:v-diagram}, where we plot the space of \PQ bus voltage magnitudes for the network in Figure \ref{Fig:example-network-1b}. Voltage-space is partitioned into three useful sets (i) a blue high-voltage set where the unique solution is found (ii) a red low-voltage set where undesirable solutions may or may not exist, and (iii) a grey medium-voltage and extra-high voltage set, which is devoid of solutions. The width of the grey medium-voltage set characterizes the ``stability margin'' of the system. In fact, a calculation using \eqref{Eq:DefinedQuantities} shows that
$$
v_{i,+}^2-v_{i,-}^2 = \sqrt{1-(\Delta_i+4\Gamma_i^2)}\,.
$$
Therefore, the margin with which the condition \eqref{Eq:Omega1} is satisfied is directly related to the proximity between the high-voltage solution and any low-voltage solution. In any of the described cases where the conditions \eqref{Eq:Conditions1} are also necessary, all solutions depicted in Figure \ref{Fig:v-diagram} are located at the ``corners'' of their respective boxes, nearest the centre of the figure. 

\begin{figure}[!ht]
        \begin{center}
                \includegraphics[width=0.7\columnwidth]{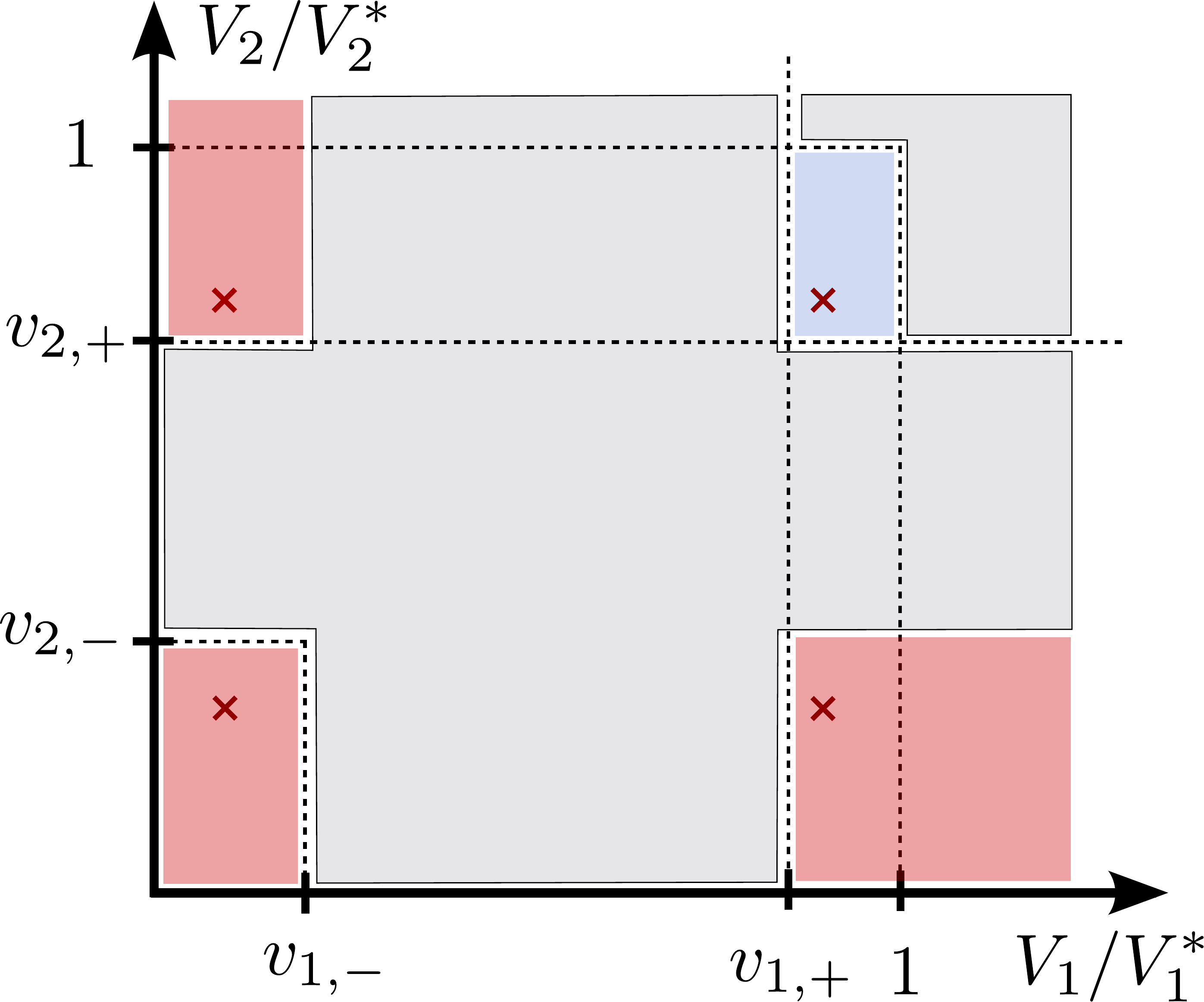}
                \caption{Space of \PQ bus voltage magnitudes for the network of Figure \ref{Fig:example-network-1b} under the conditions of Theorem \ref{Thm:Main1}. Power flow solutions are marked by red crosses (\textsf{\textcolor{red}{x}}).}
                \label{Fig:v-diagram}
\end{center}
\end{figure}

\begin{proof}.
The proof has four main steps: 1) showing that under the conditions \eqref{Eq:Omega1}--\eqref{Eq:Gamma1}, the quantities $v_{i,\pm}, \gamma_{i}$ and $\gamma_{ij}$ in \eqref{Eq:delta1}--\eqref{Eq:gamma1} are well-defined and belong to the specified sets, 2) showing existence, 3) showing uniqueness, and 4) showing necessity.
\textbf{Step 1:} Under \eqref{Eq:Omega1}, $\Delta_i$ and $\Gamma_{i}$ are constrained to the semi-open parameter set 
$$
\mathcal{P}_i \triangleq \setdef{(\Delta_i,\Gamma_{i})\in\real^2_{\geq 0}}{\Delta_i + 4\Gamma_{i}^2 < 1}\,,
$$
shown in Figure \ref{Fig:P1set}. For each pair $(\Delta_i,\Gamma_{i}) \in \mathcal{P}_i$ the inner and outer roots in \eqref{Eq:delta1} are well-posed, and hence $v_{i,\pm}$ are well-defined. Moreover, for $(\Delta_i,\Gamma_{i})\in\overline{\mathcal{P}}_i$ inspection shows that $v_{i,+}$ is a strictly decreasing function of both $\Delta_i$ and $\Gamma_{i}$, achieving its maximum of $1$ at $(\Delta_i,\Gamma_{i}) = (0,0) \in \mathcal{P}_i$ and its infimum of $\frac{1}{2}$ at $(\Delta_i,\Gamma_{i}) = (1,0) \in \overline{\mathcal{P}}_i$. Hence $v_{i,+} \in (\frac{1}{2},1]$ as claimed. Similarly, $v_{i,-}$ is a strictly increasing function of both $\Delta_i$ and $\Gamma_{i}$, achieving its minimum of $0$ at $(\Delta_i,\Gamma_{i})=(0,0) \in \mathcal{P}_i$ and its supremum of $\frac{1}{\sqrt{2}}$ at $(\Delta_i,\Gamma_{i}) = (0,\frac{1}{2}) \in \overline{\mathcal{P}}_i$. Hence $v_{i,-} \in {[0,\frac{1}{\sqrt{2}})}$ as claimed.
To establish that $\gamma_{i}$ is well-defined, square \eqref{Eq:gamma1}:
\begin{equation*}\label{Eq:Gammaglwelldefined}
\sin^2(\gamma_{i}) = \frac{\Gamma_{i}^2}{(v_{i,+})^2} = \frac{2\Gamma_{i}^2}{{1-\frac{\Delta_i}{2} + \sqrt{1-(\Delta_i+4\Gamma_{i}^2)}}}\,,
\end{equation*}
where we have substituted for $v_{i,+}$ using \eqref{Eq:delta1}. One quickly computes from this equality that
$$
\sup_{(\Delta_i,\Gamma_{i})\in\mathcal{P}_i}\sin^2(\gamma_{i}) = 1/2
$$
which is uniquely achieved at $(\Delta_i,\Gamma_{i}) = (0,\frac{1}{2}) \in \overline{\mathcal{P}_i}$. This establishes that $\sin(\gamma_{i}) \in {[0,\frac{1}{\sqrt{2}})}$ and therefore that $\gamma_{i} \in {[0,\frac{\pi}{4})}$ as claimed. That $\gamma_{ij} \in {[0,\frac{\pi}{2})}$ for each $(i,j)\in\mathcal{E}^{gg}$ follows immediately from \eqref{Eq:Gamma1}. Thus all quantities in  \eqref{Eq:delta1}--\eqref{Eq:gamma1} are well-defined and belong to the specified sets. 

\textbf{Step 2:} We begin with the fixed point equation \eqref{Eq:FixedPoint}, and make the change of variables $x = v - \vones[n]$, which shifts the open-circuit solution to the origin. Under the assumption that there are no branches between \PQ buses, we have that $\mathcal{E}^{\ell\ell} = \emptyset$, $\mathsf{S}$ is diagonal, and the fixed point function $f$ in equation \eqref{Eq:FixedPoint} simplifies to
\begin{align}
\nonumber
f(x) &= -\frac{1}{4}\mathsf{S}^{-1}\left([Q_L]r(x) + |A|_L^{g\ell}\mathsf{D}_{g\ell}u_{g\ell}(x)\right)\,,\\
\label{Eq:FixedPointCompact}
 &= -\frac{1}{4}\mathsf{S}^{-1}[Q_L]r(x) - Nu_{g\ell}(x)\,,
\end{align}
where $r(x) = (\frac{1}{1+x_1},\ldots,\frac{1}{1+x_n})^{\sf T}$, $N \triangleq -\frac{1}{4}\mathsf{S}^{-1}|A|_L^{g\ell}\mathsf{D}_{g\ell}$, and with an abuse of notation we retain the names $f$ and $u_{g\ell}$ for the functions. In components, \eqref{Eq:FixedPointCompact} reads for each $i \in \mathcal{N}_L$ as
\begin{align}\label{Eq:FixedPointComponent}
f_i(x) &= -\frac{Q_i}{4\mathsf{S}_{ii}}\frac{1}{1+x_i} - \sum_{j\in\PV(i)}\nolimits N_{ij}u_{ji}(x)\,,
\end{align}
where $u_{ji}(x)$ is the component of $u_{g\ell}(x)$ corresponding to edge $(j,i)\in\mathcal{E}^{g\ell}$. However, one may deduce from \eqref{Eq:ugl} and \eqref{Eq:VComponentForm} that in the above sum, $u_{ji}(x)$ depends only on $x_i$ for each $j \in \PV(i)$. Thus, $f_i(x) = f_i(x_i)$, and \eqref{Eq:FixedPointComponent} reads as
\begin{subequations}
\begin{align}\label{Eq:FixedPointComponent2}
f_i(x_i) &= -\frac{\Delta_i}{4}\frac{1}{1+x_i} - \sum_{j\in\PV(i)}\nolimits N_{ij}u_{ji}(x_i)\,,\\
\label{Eq:uij}
u_{ji}(x_i) &= 1 - \sqrt{1-\frac{(p_{ji}/\mathsf{D}_{ji})^2}{(1+x_i)^2}}
\end{align}
\end{subequations}
where we have inserted $\Delta_i$ from \eqref{Eq:Delta1}, and for convenience have explicitly  written out the formula for $u_{ji}(x_i)$. The fixed point equation $x = f(x)$ is therefore $n$ \emph{decoupled scalar fixed point equations} \eqref{Eq:FixedPointComponent2}, and may be studied separately; going forward we analyze the $i$th component.

For some $\delta_i \in {[0,1)}$, we will now seek to show invariance of the compact interval $[-\delta_i,0]$ under the scalar fixed point map \eqref{Eq:FixedPointComponent2}. Suppose that $x_i \in [-\delta_i,0]$. Since $(S_{LL})_{ii} < 0$ by construction, and $Q_i \leq 0$ by assumption, it follows that $\Delta_i \geq 0$, and the first term in \eqref{Eq:FixedPointComponent2} is nonpositive. Lemma \ref{Lem:RowStochastic} shows that $N$ is row-stochastic, and thus $N_{ij}$ are nonnegative numbers. Assuming that $u_{ji}(x_i)$ is well-defined for each $j \in \PV(i)$, it follows that $f_i(x_i) \leq 0$. In fact, the previous argument is valid for any $x_i \geq -\delta_i$. It follows that if $x_i > -\delta_i$, then $f_i(x_i) \leq 0$. We conclude that there are no fixed points of $f_i$ in $\real_{>0}$.

Having established the upper bound, we now proceed to lower bound $f_i(x_i)$ as
\begin{equation}\label{Eq:Fcomponentbounding}
\begin{aligned}
f_i(x_i) &\geq -\frac{\Delta_i}{4}\frac{1}{1-\delta_i} - \sum_{j\in\PV(i)}\nolimits N_{ij}u_{ji}(x_i)\\
&\geq -\frac{\Delta_i}{4}\frac{1}{1-\delta_i} - \max_{j\in\PV(i)}u_{ji}(x_i)\cdot \underbrace{\sum_{j\in\PV(i)}\nolimits N_{ij}}_{=1}\\
%
\end{aligned}
\end{equation}
where we have used that $\sum_{j\in\PV(i)}N_{ij} = \sum_{j=n+1}^{n+m}N_{ij} = 1$ since $N$ is row-stochastic. From \eqref{Eq:uij}, it follows that
\begin{equation}\label{Eq:uijbounding}
\begin{aligned}
\max_{j\in\PV(i)}u_{ji}(x_i) &= 1 - \sqrt{1-\max_{j\in\PV(i)}\frac{(p_{ji}/\mathsf{D}_{ji})^2}{(1+x_i)^2}}\\
&\geq 1 - \sqrt{1-{\Gamma_i^2}/{(1-\delta_i)^2}}
\end{aligned}
\end{equation}
where we have inserted $\Gamma_i$ from \eqref{Eq:Gamma1gl} and used that $x_i \in [-\delta_{i},0]$. Putting things together now, we have that
$$
f_i(x_i) \geq -\frac{\Delta_i}{4}\frac{1}{1-\delta_i}  - 1 + \sqrt{1-{\Gamma_i^2}/{(1-\delta_i)^2}}\,.
$$
To ensure that $f_i(x_i) \geq -\delta_i$, we therefore require that the above is further lower-bounded by $-\delta_i$, yielding the inequality
\begin{equation}\label{Eq:ScalarFixedPointInequality}
\frac{\Delta_i}{4}\frac{1}{1-\delta_i} + 1 - \sqrt{1-\frac{\Gamma_{i}^2}{(1-\delta_i)^2}} \leq \delta_i\,.
\end{equation}
Isolating the rooted term, squaring both sides, and simplifying, we arrive at the equivalent inequality 
%
\begin{equation}\label{Eq:DeltaTempe}
(1-\delta_i)^4 - (1-\delta_i)^2\left(1-\frac{\Delta_i}{2}\right) + \Gamma_{i}^2 + \frac{1}{16}\Delta_i^2  \leq 0\,.
\end{equation}
Applying Lemma \ref{Lem:QuarticI}, we find that there exists an interval of values for $\delta_i$ satisfying \eqref{Eq:DeltaTempe} if and only if $\Delta_i + 4\Gamma_i^2 < 1$. In particular, the largest such interval is given by $\mathcal{I}_i = [\delta_{i,-},\delta_{i,+}]$ where $\delta_{i,\pm}$ are defined as $$
\delta_{i,\pm} = 1-v_{i,\mp}\,,
$$
with $v_{i,\mp}$ as in \eqref{Eq:delta1}. The inequality \eqref{Eq:DeltaTempe} is satisfied with strict inequality sign for $\delta_i \in \mathrm{int}(\mathcal{I}_i) = (\delta_{i,-},\delta_{i,+})$, and with strict equality sign for $\delta_i \in \{\delta_{i,-},\delta_{i,+}\}$. We have therefore established that if $\Delta_i + 4\Gamma_i^2 < 1$, then for any $\delta_i \in [\delta_{i,-},\delta_{i,+}]$ it holds that
\begin{equation}\label{Eq:Invariance1}
x_i \in [-\delta_i,0] \quad \Longrightarrow \quad f_i(x_i) \in [-\delta_i,0]\,,
\end{equation}
which in particular shows that the set $[-\delta_{i,-},0]$ is an invariant set for $f_i$. 
Applying the Brouwer Fixed Point Theorem \cite[Section 7, Corollary 8]{EHS:94} therefore establishes the existence of at least one fixed point $x_i \in [-\delta_{i,-},0]$ for \eqref{Eq:FixedPointComponent2}. The above results hold for all fixed point equations $x_i = f_i(x_i)$ simultaneously if and only if $\Delta_i + 4\Gamma_i^2 < 1$ for all $i \in \mathcal{N}_L$, which holds if and only if the voltage stability condition \eqref{Eq:Omega1} holds. This establishes the existence of a fixed point $x \in \real^n$ for the vector fixed point equation \eqref{Eq:FixedPointCompact} satisfying the bounds $x_i \in [-\delta_{i,-},0]$ for each $i \in \mathcal{N}_L$.
In addition, we have shown that for any $\delta_{i} \in (\delta_{i,-},\delta_{i,+})$
\begin{equation}\label{Eq:Invariance2}
x_i \in [-\delta_i,0] \quad \Longrightarrow \quad f_i(x_i) \in (-\delta_i,0]\,,
\end{equation}
meaning that $x_i$ is mapped inside the original set. It follows that $[-\delta_{i,-},0]$ is the largest invariant set contained inside $(-\delta_{i,+},0]$, and that there can be \emph{no fixed points} in the set $(-\delta_{i,+},-\delta_{i,-},)$.

\textbf{Step 3:} We now show uniqueness of the fixed point. We calculate from \eqref{Eq:FixedPointComponent2} that
\begin{equation}\label{Eq:FixedPointDerivative}
\frac{\mathrm{d} f_i}{\mathrm{d} x_i}(x_i) = \frac{\Delta_i}{4}\frac{1}{(1+x_i)^2} - \sum_{j\in\PV(i)}\nolimits N_{ij}\frac{\mathrm{d} u_{ji}}{\mathrm{d} x_i}(x_i)\,.
\end{equation}
A computation using \eqref{Eq:uij} shows that
\begin{equation}\label{Eq:Diffugle}
\frac{\mathrm{d} u_{ji}}{\mathrm{d} x_i}(x_i) = \frac{-(p_{ji}/\mathsf{D}_{ji})^2}{(1+x_i)^3}\frac{1}{\sqrt{1-(p_{ji}/\mathsf{D}_{ji})^2/{(1+x_i)^2}}}\,
\end{equation}
By similar bounding as in the existence proof above, one may deduce that that the derivative \eqref{Eq:Diffugle} is continuous under condition \eqref{Eq:Omega1} on the interval $[-\delta_{i,-},0]$, and that the derivative is nonpositive. For $x_i \in [-\delta_{i,-},0]$ we therefore have that
\begin{align*}
\max_{j\in\PV(i)}\left|\frac{\mathrm{d} u_{ji}}{\mathrm{d} x_i}\right| 
&\leq \frac{\Gamma_{i}^2}{(1-\delta_{i,-})^3}\frac{1}{\sqrt{1-\Gamma_{i}^2/(1-\delta_{i,-})^2}}\,.
\end{align*}
where we have inserted $\Gamma_i$ from \eqref{Eq:Gamma1gl}. Returning now to \eqref{Eq:FixedPointDerivative}, we have for $x_i \in [-\delta_{i,-},0]$ that
\begin{align*}
\left|\frac{\mathrm{d} f_i}{\mathrm{d} x_i}(x_i)\right| &\leq \frac{\Delta_i}{4}\frac{1}{(1-\delta_{i,-})^2} + \max_{j\in\PV(i)}\left|\frac{\mathrm{d} u_{ji}}{\mathrm{d} x_i}\right|\cdot \sum_{j\in\PV(i)}N_{ij}\\
&\leq \frac{\Delta_i}{4}\frac{1}{(1-\delta_{i,-})^2} + \frac{\Gamma_{i}^2}{(1-\delta_{i,-})^3}\frac{1}{\sqrt{1-\frac{\Gamma_{i}^2}{(1-\delta_{i,-})^2}}}\,.\\
&\triangleq \beta(\Delta_i,\Gamma_{i})\,,
\end{align*}
where we have used that $Q_i \leq 0$, that $N$ is row-stochastic by Lemma \ref{Lem:RowStochastic}, and defined the result to be $\beta(\Delta_i,\Gamma_{i})$.
We now seek to show that $\beta(\Delta_i,\Gamma_{i}) < 1$ for all $(\Delta_i,\Gamma_i) \in \mathcal{P}_i$. First, we observe that $\beta(0,0) = 0$. Next, note that since $\delta_{i,-}$ is a strictly increasing continuous function of both $\Delta_{i}$ and $\Gamma_{i}$ for $(\Delta_i,\Gamma_{i})\in\mathcal{P}_i$, so is $\beta(\Delta_i,\Gamma_i)$. To establish that $\beta < 1$, it therefore suffices to check that $\beta \leq 1$ on the boundary of $\mathcal{P}_i$ given by $\mathrm{bd}(\mathcal{P}_i) = \setdef{(\Delta_i,\Gamma_{i}) \in \overline{\mathcal{P}_i}}{\Delta_i+4\Gamma_{i}^2 = 1}$. We find from \eqref{Eq:delta1} using simple algebra that
$$
1-\delta_{i,-}\big|_{\mathrm{bd}(\mathcal{P}_i)} = \sqrt{(1-\Delta_i/2)/2}\,.
$$
Inserting this into the expression for $\beta(\Delta_i,\Gamma_{i})$ and eliminating $\Gamma_{i}$ via $\Gamma_{i}^2 = \frac{1}{4}(1-\Delta_i)$, some elementary algebra shows that
\begin{align*}
\beta|_{\mathrm{bd}(\mathcal{P}_i)} &= 1\,.
\end{align*}
It follows that $\beta(\Delta_i,\Gamma_{i}) < 1$ if and only if $\Delta_i + 4\Gamma_i^2 < 1$.
%
%
Since in addition $\frac{\mathrm{d} f_i}{\mathrm{d} x_i}(x_i)$ is continuous in $x_i$ on $[-\delta_{i,-},0]$, it follows from Lemma \ref{Lem:Contraction} that $f_i$ is a contraction mapping on $[-\delta_{i,-},0]$. Finally, since $[-\delta_{i,-},0])$ is compact and (as previously shown) invariant under $f_i$, it follows from the Banach Fixed Point Theorem \cite[Theorem 9.32]{WR:76} that $f_i$ possess a unique fixed point $x_i \in [-\delta_{i,-},0])$. Combining the results component by component, we conclude that under the voltage stability condition \eqref{Eq:Omega1}, the vector fixed point equation \eqref{Eq:FixedPointCompact} possess a unique fixed point satisfying the bounds $x_i \in [-\delta_{i,-},0]$ for each $i \in \mathcal{N}_L$, or equivalently, $v_i \in [v_{i,+},1]$.
We now return to the equation \eqref{Eq:psix} for the phase angle differences $\eta$, which we write component-wise as
\begin{subequations}
\begin{align}
\sin(\eta_{ji}) &= \frac{p_{ji}/\mathsf{D}_{ji}}{v_i} = \frac{p_{ji}/\mathsf{D}_{ji}}{1+x_i}\,, &\quad (j,i) & \in \mathcal{E}^{g\ell}\,,\\\
\sin(\eta_{ji}) &= p_{ji}/\mathsf{D}_{ji}\,, &\quad (j,i) &\in \mathcal{E}^{gg}\,.
\end{align}
\end{subequations}
For $(j,i)\in\mathcal{E}^{g\ell}$ we compute that
\begin{align*}
|\sin(\eta_{ji})| &\leq \max_{j\in\PV(i)}\frac{|p_{ji}|/\mathsf{D}_{ji}}{1+x_i} = \frac{\Gamma_i}{1+x_i} \leq \frac{\Gamma_i}{1-\delta_{i,-}}\\
&= \sin(\gamma_i) < 1\,,
\end{align*}
where in the last line we have inserted \eqref{Eq:gamma1}. It follows similarly that for each $(j,i)\in\mathcal{E}^{gg}$
$$
|\sin(\eta_{ji})| \leq \sin(\gamma_{ij}) < 1\,,
$$
where $\gamma_{ij}$ is as in \eqref{Eq:gamma1}.
Therefore, by applying $\mathrm{arcsin}$ component-wise to both sides, the equation \eqref{Eq:psix} is solvable for a unique vector of angle differences $\eta$ which component-wise satisfies the bounds $|\eta_{ji}| \leq \gamma_i$ for $(j,i)\in\mathcal{E}^{g\ell}$ and $|\eta_{ij}| \leq \gamma_{ij}$ for $(i,j)\in\mathcal{E}^{gg}$.
Equivalently, by \cite[Corollary 3.3]{JWSP:17a} the active power flow \eqref{Eq:Active} is solvable for a unique angle solution $\theta \in \Theta(\pi/2)$ with angular differences $\eta = A^{\sf T}\theta$; this establishes that $u_{ij}(x)$ was in fact well-defined during the previous manipulations. Together, the above shows that the power flow equations \eqref{Eq:Active}--\eqref{Eq:Reactive} possess a unique solution $(\theta,V_L)$ satisfying the bounds \eqref{Eq:SolutionBoundsMain1}, and also establishes the first ``moreover'' statement.

\textbf{Step 4:} We proceed to necessity. First, note that when $|\PV(i)| = 1$ for all \PQ buses $i \in \mathcal{N}_L$, then the sum in \eqref{Eq:FixedPointComponent} contains only one term. In this case, no bounding is required: each decoupled fixed point equation $x_i = f_i(x_i)$ may be manipulated into the form \eqref{Eq:SingleLineIntermediate} and then directly solved for its two unique solutions, which will be well defined if and only if \eqref{Eq:Conditions1} holds; we omit the details.
Under loading scenario (i), the equations reduce to decoupled reactive power flow, and the result was shown in \cite[Supplementary Theorem 1]{jwsp-fd-fb:14c}. Under loading scenario (iii), the equations reduce to decoupled active power flow on a radial network, and the result was shown in \cite{FD-MC-FB:11v-pnas}. Necessity for loading scenario (ii) is shown by contraposition in the supplementary appendix.
%
%
%
%
%
%
%
%
%
\end{proof}

\begin{remark}[\bf Convergence of FPPF Iteration]\label{Rem:FPPFConvergence} 
The proof of uniqueness in Theorem \ref{Thm:Main1} relies on showing that the fixed-point power flow $v = f(v)$ is a contraction mapping. Moreover, we showed that the conditions under which $f$ is a contraction are also tight conditions for existence of a solution. It follows that under our conditions, the FPPF iteration $v_{k+1} = f(v_{k})$ converges to the unique high-voltage solution from any initial condition within the blue or grey boxes in Figure \ref{Fig:v-diagram}, and if it does not converge, there is likely no solution to be found. While these conclusions are restricted to the radial case with no connections between \PQ buses, this provides theoretical support for the reliability and robustness of the FPPF iteration \cite[Algorithm 1]{JWSP:17a} from Part I.
\oprocend
\end{remark}

\subsection{Main Results 2: General Radial Case}
\label{Sec:MainResults2}

In this section we allow for the more general radial case where \PQ buses are connected to one another; an example network is shown in Figure \ref{Fig:example-network-2a}. The presence of \PQ\!\!--\PQ connections (which are absent from the two-bus model of Section \ref{Sec:SolutionTwoBus}) severely complicates the analysis. The conditions we derive treat active power flows between \PQ buses quite conservatively, and can guarantee only existence.

\begin{figure}[!ht]
        \begin{center}
        \includegraphics[height=0.15\columnwidth]{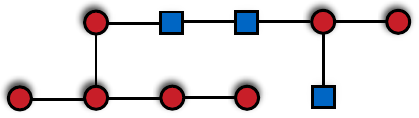}
		\caption[]{An arbitrary radial network of \PQ buses ({\tikz\draw[black,semithick,fill=red] (0,0) circle (.5ex);}) and \PV buses ({\tikz{\path[draw=black,semithick,fill={rgb:red,0;green,102;blue,196}] (0,0) rectangle (0.12cm,0.12cm);}}). The network displays \PV\!\!--\PV\!\!, \PV\!\!--\PQ\!\!, and \PQ\!\!--\PQ connections.}
        \label{Fig:example-network-2a}
\end{center}
\end{figure}

\begin{theorem}\label{Thm:Main2}\textbf{(Solvability of Lossless Power Flow on Radial Networks II)}
Consider the lossless power flow equations \eqref{Eq:Active}--\eqref{Eq:Reactive} and assume the network $(\mathcal{N},\mathcal{E})$ is radial. Let the branch and nodal stiffness matrices $\mathsf{D}$ and $\mathsf{S}$ be as in \eqref{Eq:DMatrix} and \eqref{Eq:Qcrit} respectively, with the branch-wise active power flows $p = (p_{\ell\ell},p_{g\ell},p_{gg})$ as in \eqref{Eq:BranchFlows}, and the partitioning of the unoriented incidence matrix $|A|$ as in \eqref{Eq:IncidenceAbs}. Define the maximum \PQ bus voltage stress by
\begin{equation}\label{Eq:Delta2}
\Delta \triangleq \Big\|\mathsf{S}^{-1}\left(Q_L - 4|A|_L^{\ell\ell}\mathsf{D}_{\ell\ell}^{-1}[p_{\ell\ell}]p_{\ell\ell}\right)\Big\|_{\infty}\,,
\end{equation}
and the maximum \PQ\!\!--\PQ angle stress, \PV\!\!--\PQ angle stress, and \PV\!\!--\PV angle stress by
\begin{equation}\label{Eq:Gamma2}
\begin{aligned}
\Gamma_{\ell\ell} &\triangleq \|\mathsf{D}_{\ell\ell}^{-1}p_{\ell\ell}\|_{\infty}\,, \qquad \Gamma_{g\ell} \triangleq \|\mathsf{D}_{g\ell}^{-1}p_{g\ell}\|_{\infty}\,,\\
\Gamma_{gg} &\triangleq \|\mathsf{D}_{gg}^{-1}p_{gg}\|_{\infty}\,.
\end{aligned}
\end{equation}
If the above quantities satisfy
\begin{subequations}
\begin{align}
\label{Eq:Omega2}
\Delta + 4\Gamma_{g\ell}^2 &< 1\\
\label{Eq:Gammall2}
\Gamma_{\ell\ell} &< 1/4\\
\label{Eq:Gammagg2}
\Gamma_{gg} &< 1\,,
\end{align}
\end{subequations}
then the power flow equations \eqref{Eq:Active}--\eqref{Eq:Reactive} possess a solution $(\theta,V_L)$ satisfying the bounds
\begin{subequations}\label{Eq:SolutionBoundsMain2}
\begin{align}
\label{Eq:SolutionBoundsMain21}
{\tb v_{+} \leq V_i/V_i^*} &{\tb \leq 1}\,, &\quad i &\in \mathcal{N}_L\\
\label{Eq:SolutionBoundsMain22}
|\eta_{ij}| &\leq \gamma_{\ell\ell}\,, &\quad (i,j) &\in \mathcal{E}^{\ell\ell}\\
\label{Eq:SolutionBoundsMain23}
|\eta_{ij}| &\leq \gamma_{g\ell}\,, &\quad (i,j) &\in \mathcal{E}^{g\ell}\\
\label{Eq:SolutionBoundsMain24}
|\eta_{ij}| &\leq \gamma_{gg}\,, &\quad (i,j) &\in \mathcal{E}^{gg}
\end{align}
\end{subequations}
where $v_+ \in (\frac{1}{2},1]$, and $\gamma = (\gamma_{\ell\ell},\gamma_{g\ell},\gamma_{gg}) \in {[0,\frac{\pi}{2})}\times {[0,\frac{\pi}{4})} \times {[0,\frac{\pi}{2})}$ are defined by
\begin{subequations}\label{Eq:DefinedQuantities2}
\begin{align}
\label{Eq:delta2}
v_+ &\triangleq \sqrt{\frac{1}{2}\left(1-\frac{\Delta}{2} + \sqrt{1-(\Delta+4\Gamma_{g\ell}^2)}\right)}\,,\\
\label{Eq:gamma2}
\sin(\gamma_{\ell\ell}) &\triangleq {\Gamma_{\ell\ell}}/{v_+^2}\,,\quad \sin(\gamma_{g\ell}) \triangleq {\Gamma_{g\ell}}/{v_+}\,,\\
\label{Eq:gamma22}
\sin(\gamma_{gg}) &\triangleq \Gamma_{gg}\,.
\end{align}
\end{subequations}
\end{theorem}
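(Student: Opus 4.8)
The plan is to establish existence by exhibiting a compact, convex set that is invariant under the (shifted) fixed-point map \eqref{Eq:FixedPoint} and invoking Brouwer's theorem \cite[Section 7, Corollary 8]{EHS:94}; note that, unlike Theorem \ref{Thm:Main1}, uniqueness and necessity are \emph{not} pursued here. Following the proof of Theorem \ref{Thm:Main1}, I would change variables to $x = v - \vones[n]$ so the open-circuit point sits at the origin, and work with the candidate box $\mathcal{B}_\delta \triangleq \setdef{x\in\real^n}{-\delta \leq x_i \leq 0,\ i\in\mathcal{N}_L}$ for a suitable $\delta = 1 - v_+ \in (0,\tfrac12)$. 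The structural fact driving everything is that, since $B_{LL}$ is negative definite with positive off-diagonal entries, $-B_{LL}$ is a Stieltjes matrix, so $B_{LL}^{-1}\leq 0$ entrywise and hence $M \triangleq -\mathsf{S}^{-1} = -4[V_L^*]^{-1}B_{LL}^{-1}[V_L^*]^{-1} \geq 0$ entrywise. Because $[Q_L]\leq 0$, $\mathsf{D}_{g\ell},\mathsf{D}_{\ell\ell} > 0$, $|A|_L^{g\ell}, A_L^{\ell\ell}(\pm)\geq 0$, the voltages $v>0$, and $u_{g\ell},u_{\ell\ell}\geq 0$, every term of the shifted $f$ is then nonpositive componentwise. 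This gives the ``upper'' half of invariance, $f(x)\leq 0$, essentially for free, so the entire difficulty concentrates in the lower bound $f_i(x)\geq -\delta$, equivalently $\|f(x)\|_\infty \leq \delta$ on $\mathcal{B}_\delta$.

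A preliminary Step~1 would mirror Theorem \ref{Thm:Main1}: under \eqref{Eq:Omega2}--\eqref{Eq:Gammagg2}, the quantities \eqref{Eq:delta2}--\eqref{Eq:gamma22} are well-defined and lie in the stated ranges. The pair $(\Delta,\Gamma_{g\ell})$ feeds the identical two-bus formulas, giving $v_+\in(\tfrac12,1]$ and $\gamma_{g\ell}\in[0,\tfrac{\pi}{4})$; that $\gamma_{gg}\in[0,\tfrac{\pi}{2})$ is immediate from \eqref{Eq:Gammagg2}; and $\sin\gamma_{\ell\ell} = \Gamma_{\ell\ell}/v_+^2 < 1$ follows from $v_+^2 > \tfrac14$ together with $\Gamma_{\ell\ell}<\tfrac14$, giving $\gamma_{\ell\ell}\in[0,\tfrac{\pi}{2})$.

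The heart of the proof is the $\infty$-norm bound, which I would assemble from two ingredients. First, I would check that $N \triangleq -\tfrac14\mathsf{S}^{-1}|A|_L^{g\ell}\mathsf{D}_{g\ell}$ remains row-stochastic even though $\mathsf{S}$ is no longer diagonal: the identity $|A|_L^{g\ell}\mathsf{D}_{g\ell}\vones[g\ell] = -[V_L^*]B_{LL}V_L^*$ uses only the definition \eqref{Eq:VLstar} of $V_L^*$, whence $N\vones[g\ell] = \vones[n]$ as in Lemma \ref{Lem:RowStochastic}. Since $N\geq 0$ is row-stochastic, the \PV\!\!--\PQ term obeys $\|\tfrac14 M|A|_L^{g\ell}\mathsf{D}_{g\ell}u_{g\ell}(x)\|_\infty \leq \|u_{g\ell}(x)\|_\infty \leq 1 - \sqrt{1-\Gamma_{g\ell}^2/(1-\delta)^2}$, exactly as in the scalar case. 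Second --- and this is the main obstacle --- I must absorb the two \PQ\!\!--\PQ terms into the effective loading $\Delta$ of \eqref{Eq:Delta2}. Using $r_i(x)\leq 1/(1-\delta)$ for the reactive term, and bounding each \PQ\!\!--\PQ edge contribution $v_{j}\mathsf{D}_{\ell\ell,e}u_{\ell\ell,e}$ via $1-\sqrt{1-w}\leq w$ with $w = p_e^2/(v_i^2v_j^2\mathsf{D}_{\ell\ell,e}^2)$, the per-edge term is controlled by $\tfrac{4}{1-\delta}\,p_e^2/\mathsf{D}_{\ell\ell,e}$ precisely because $v_i^2 v_j \geq v_+^3 \geq (1-\delta)/4$ (the last step using $v_+ = 1-\delta > \tfrac12$); condition \eqref{Eq:Gammall2} separately guarantees $w<1$, so that $u_{\ell\ell}$ is well-defined on $\mathcal{B}_\delta$. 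Assembling these componentwise bounds and using $M\geq 0$, the reactive and \PQ\!\!--\PQ contributions together are dominated by $\tfrac{1}{4(1-\delta)}M\big(|Q_L| + 4|A|_L^{\ell\ell}\mathsf{D}_{\ell\ell}^{-1}[p_{\ell\ell}]p_{\ell\ell}\big)$, whose $\infty$-norm is exactly $\Delta/(4(1-\delta))$ by \eqref{Eq:Delta2}.

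Combining the two ingredients reduces the invariance requirement to the scalar inequality
\begin{equation*}
\frac{\Delta}{4}\frac{1}{1-\delta} + 1 - \sqrt{1-\frac{\Gamma_{g\ell}^2}{(1-\delta)^2}} \leq \delta\,,
\end{equation*}
which is identical in form to \eqref{Eq:ScalarFixedPointInequality}. By Lemma \ref{Lem:QuarticI} this is solvable in $\delta$ if and only if $\Delta + 4\Gamma_{g\ell}^2 < 1$, i.e.\ \eqref{Eq:Omega2}, with $\delta = 1 - v_+$ an admissible choice; thus $\mathcal{B}_{1-v_+}$ is compact, convex, and invariant, and Brouwer yields a fixed point $v$ with $v_+\leq v_i\leq 1$. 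Finally I would recover the angles from \eqref{Eq:psix}: on $\mathcal{B}_{1-v_+}$ the three edge classes satisfy $|\sin\eta_e| \leq \Gamma_{\ell\ell}/v_+^2$, $\Gamma_{g\ell}/v_+$, and $\Gamma_{gg}$ respectively, each strictly below $1$ by Step~1, so applying $\arcsin$ componentwise delivers the bounds \eqref{Eq:SolutionBoundsMain22}--\eqref{Eq:SolutionBoundsMain24}; the companion result \cite[Corollary 3.3]{JWSP:17a} then certifies a unique $\theta\in\Theta(\pi/2)$, confirming a posteriori that the $u$-terms were well-defined throughout. The decisive difficulty, relative to Theorem \ref{Thm:Main1}, is that the map no longer decouples, so the scalar analysis is replaced by the $\infty$-norm estimate anchored on row-stochasticity of $N$, and the \PQ\!\!--\PQ coupling must be conservatively folded into $\Delta$ --- which is exactly why only existence, and not uniqueness or necessity, is claimed here.
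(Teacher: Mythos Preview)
Your proposal is correct and follows essentially the same route as the paper's proof: change of variables $x=v-\vones[n]$, invariance of the box $\mathcal{B}_\delta$ via sign arguments for the upper bound and an $\infty$-norm estimate for the lower bound, row-stochasticity of $N$ for the $g\ell$ term, the linearization $1-\sqrt{1-w}\leq w$ together with $v_i v_j > 1/4$ and $v_i \geq 1-\delta$ for the $\ell\ell$ terms (your $v_+^3\geq (1-\delta)/4$ is just a repackaging of this), reduction to the scalar quartic via Lemma~\ref{Lem:QuarticI}, Brouwer, and angle recovery from \eqref{Eq:psix}. The only cosmetic difference is that the paper keeps $\delta$ generic until the end, whereas you fix $\delta=1-v_+$ at the outset; both lead to the same invariance inequality and the same conclusion.
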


\begin{proof}.
Available in the supplementary appendix.
\end{proof}

Theorem \ref{Thm:Main2} gives weaker results than Theorem \ref{Thm:Main1}, guaranteeing only the existence of a solution satisfying the bounds \eqref{Eq:SolutionBoundsMain2}, but not uniqueness. This discrepancy between the two results reflects the difficulty of non-conservatively analyzing active power flows between \PQ buses (as reflected in the final two terms of the FPPF \eqref{Eq:FixedPoint}).
The quantity $\Delta$ in \eqref{Eq:Delta2} now takes into account these active power flows between \PQ buses. 
The condition \eqref{Eq:Omega2} now combines the worst-case ``reactive'' power stress \eqref{Eq:Delta2} with the worst-case active power stress $\Gamma_{g\ell}$ between \PV and \PQ buses, as opposed to the ``node-by-node'' condition \eqref{Eq:Omega1}. The bounds \eqref{Eq:SolutionBoundsMain2} on the solution are much the same as in Theorem \ref{Thm:Main1}, but are now uniform, as opposed to the line and node-specific bounds in \eqref{Eq:SolutionBoundsMain1}. In contrast to the case of Section \ref{Sec:MainResults1}, the nodal stiffness matrix $\mathsf{S}$ is now no longer diagonal, and its inverse \textemdash{} a dense, nonpositive, impedance-like matrix \textemdash{} plays a key role: the matrix-vector product in {\tb \eqref{Eq:Delta2}} combines the locations of power flows with the local strength of the network in that area, quantifying the interplay between topology and load locations.

\smallskip

\begin{remark}[\bf Conservatism of Conditions]\label{Rem:Conserve}
The condition \eqref{Eq:Omega2} strongly penalizes active power flows between \PQ buses. For example, while in simple test cases the angle differences between \PQ buses may reach as high as 25$^\circ$, the bounds generated from \eqref{Eq:gamma2} typically constrain these differences to be less than a few degrees. Active power flows between \PQ buses are also constrained by the condition {\tb \eqref{Eq:Gammall2}}; this condition is a technical requirement used in the proof, and is much more accomodating than the limits imposed by \eqref{Eq:Omega2}. Thus, the condition on $\Gamma_{\ell\ell}$ in \eqref{Eq:Gammall2} can be ignored.\footnote{The author believes that \eqref{Eq:Omega2} should in fact imply \eqref{Eq:Gammall2}, but has not been able to prove that this is the case.} \oprocend
\end{remark}

\smallskip

{\tb
\begin{remark}[\bf Comparison with Literature]\label{Rem:Comparison} The results here unify and generalize recent sufficient conditions developed for solvability of decoupled active \cite{FD-MC-FB:11v-pnas} and reactive \cite{jwsp-fd-fb:14c} power flow. In our notation, the active power flow condition in \cite{FD-MC-FB:11v-pnas} reads as $\Gamma_{gg} < 1$, while the reactive power flow condition in \cite{jwsp-fd-fb:14c} reads as $\Delta = \|{\sf S}^{-1}Q_L\|_{\infty} < 1$. As Theorem \ref{Thm:Main1} shows, these conditions are necessary for the existence of a solution to the coupled equations, but interestingly are not sufficient.

To compare and contrast our conditions with those in the literature for distribution systems, we restrict our formulation to the case with one $\PV$ bus, and restrict the distribution systems to be lossless.\footnote{These are admittedly somewhat unnatural restrictions for the respective formulations, but the comparison is nonetheless instructive.} In our notation, the feasibility conditions in \cite{SB-SZ:15,SY-HDN-KST:15} become $\|{\sf S}^{-1}\|_2^* \|P_L - \boldsymbol{\mathrm{j}}Q_L\|_{2} < 1$, where $\|A\|_2^* = \max_i(\sum_{j} |A_{ij}|^2)^{1/2}$, while the condition from \cite{CW-AB-JYLB-MP:16} becomes $\|{\sf S}^{-1}(P_L-\boldsymbol{\mathrm{j}}Q_L)\|_{\infty} < 1$, respectively. We make two observations. First, these conditions are specified in terms of bus injections of active power, while our conditions use branch flows of active power. Second, these conditions are linear in the active injections. Consequently, these conditions will never be necessary and sufficient for lossless networks, as our conditions in Theorem \ref{Thm:Main1} are quadratic in active power flows. \oprocend
\end{remark}
}

\section{Conclusions and Open Problems}
\label{Sec:Conclusions}

{\tb Here in Part II we have leveraged the FPPF model developed in Part I to derive parametric conditions under which the lossless power flow equations in radial networks are guaranteed to be solvable. We first presented a detailed analysis of the two-bus case, which motivates the network results. Our first and strongest result (Theorem \ref{Thm:Main1}) established sufficient (and tight) conditions for the existence and uniqueness of a high-voltage small-angle-difference solution. This first result is restricted to networks without direct connections between \PQ buses, and naturally generalizes all results from the two-bus case. Our second result (Theorem \ref{Thm:Main2}) eliminates this restriction, but guarantees only the existence of a high-voltage solution.}

The results here are a further step towards a deeper theoretical understanding of power flow. Several direct extensions such as incorporating voltage limits $V_{i}^{\rm min} \leq V_i \leq V_i^{\rm max}$ at \PQ buses and a priori reactive power limits $Q_{i}^{\rm min} \leq Q_i \leq Q_i^{\rm max} $ at \PV buses are feasible, but have not been pursued here. However, non-trivial problems remain unaddressed. These include
\begin{enumerate}
\item establishing nonconservative conditions under which $f$ is a contraction for general radial networks, thereby showing convergence of the FPPF iteration $v_{k+1} = f(v_k)$,
\item analysis of the meshed FPPF model from Part I,
\item the relationship between contractivity of the fixed-point power flow, convexity of the energy function \cite{KD-SL-MC:15a}, monotonicity of the power flow equations \cite{KD-SL-MC:15b}, and so-called voltage regularity \cite{PAL-DJH-SA-GA:93} and \PQ controllability \cite{TTL-RAS:91},
\item the implications of our existence conditions for the stability of grid dynamics (e.g., swing equations),
\item the relationship between our necessary and sufficient conditions and the litany of heuristic voltage stability indices proposed in the power systems literature,
\item extension of the FPPF model to lossy networks, and
\item applications of FPPF model and existence conditions; some possible avenues are ultra-fast contingency screening and distributed control.
\end{enumerate}

%
\IEEEpeerreviewmaketitle

\section*{Acknowledgments}

The author thanks D. Molzahn, F. D\"{o}rfler, K. Dvijotham, K. Turitsyn, and T. Coletta for discussions related to this work.

\ifCLASSOPTIONcaptionsoff
  \newpage
\fi


\bibliographystyle{IEEEtran}
\bibliography{alias,Main,JWSP,New}

\appendices

\section{Technical Lemmas}




\medskip


\begin{lemma}\label{Lem:RowStochastic}
The matrix
\begin{equation*}
N \triangleq -\frac{1}{4}\mathsf{S}^{-1}|A|_L^{g\ell}\mathsf{D}_{g\ell}
\end{equation*}
is row-stochastic; that is, $N_{ij} \geq 0$ and $N\vones[g\ell] = \vones[n]$.
\end{lemma}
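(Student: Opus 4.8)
The plan is to verify the two defining properties of row-stochasticity separately: nonnegativity by a direct sign count, and the row-sum condition $N\vones[g\ell]=\vones[n]$ by collapsing $N\vones[g\ell]$ onto the defining relation \eqref{Eq:VLstar} for the open-circuit voltages and the definition \eqref{Eq:Qcrit} of $\mathsf{S}$.

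For nonnegativity I would exploit that in the regime where $N$ is defined there are no branches between \PQ buses, so $\mathsf{S}$ is diagonal with entries $\mathsf{S}_{ii} = \tfrac14 (V_i^*)^2 B_{ii} < 0$ (recall $B_{ii}<0$). Hence $-\tfrac14\mathsf{S}^{-1}$ is diagonal with strictly positive entries $-1/(B_{ii}(V_i^*)^2)$. The remaining factors are entrywise nonnegative: $|A|_L^{g\ell}$ is a $0/1$ matrix, while $\mathsf{D}_{g\ell}$ is diagonal with positive entries $V_j^*V_i^*B_{ji}>0$ (since $B_{ji}>0$ on a branch). Being a product of entrywise-nonnegative matrices, $N\geq 0$; concretely, for $j\in\PV(i)$ the only nonzero entries are $N_{ij} = -V_j^* B_{ji}/(V_i^* B_{ii}) > 0$.

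For the row sums I would compute $N\vones[g\ell] = -\tfrac14\mathsf{S}^{-1}\,|A|_L^{g\ell}\mathsf{D}_{g\ell}\vones[g\ell]$ and identify the vector $|A|_L^{g\ell}\mathsf{D}_{g\ell}\vones[g\ell]$. Its $i$th component is the sum $V_i^*\sum_{j\in\PV(i)} V_j^* B_{ji}$ of branch stiffnesses over all edges incident to \PQ bus $i$; matching this against the $i$th component of $[V_L^*]B_{LG}V_G$ (using $B_{ij}=B_{ji}$ and that $[B_{LG}]_{ik}\neq 0$ exactly for $k\in\PV(i)$) gives the bookkeeping identity
\begin{equation*}
|A|_L^{g\ell}\mathsf{D}_{g\ell}\vones[g\ell] = [V_L^*]\,B_{LG}V_G\,.
\end{equation*}
I would then substitute the defining relation $B_{LG}V_G = -B_{LL}V_L^*$ from \eqref{Eq:VLstar}, followed by $[V_L^*]B_{LL}V_L^* = [V_L^*]B_{LL}[V_L^*]\vones[n] = 4\mathsf{S}\vones[n]$ from \eqref{Eq:Qcrit}, to obtain $|A|_L^{g\ell}\mathsf{D}_{g\ell}\vones[g\ell] = -4\mathsf{S}\vones[n]$. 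Substituting back yields $N\vones[g\ell] = -\tfrac14\mathsf{S}^{-1}(-4\mathsf{S}\vones[n]) = \vones[n]$, as claimed.

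The only genuinely delicate step is the bookkeeping identity connecting the edge-indexed product $|A|_L^{g\ell}\mathsf{D}_{g\ell}\vones[g\ell]$ to the node-indexed quantity $[V_L^*]B_{LG}V_G$; once that correspondence between the incidence structure and the off-diagonal susceptance block is pinned down, the rest is a telescoping of the definitions \eqref{Eq:VLstar} and \eqref{Eq:Qcrit}. I note in passing that the row-sum computation goes through verbatim even when \PQ--\PQ branches are present, and that in that more general setting the direct sign count for nonnegativity would be replaced by the observation that $-B_{LL}$ is a symmetric positive definite $Z$-matrix, hence a Stieltjes matrix with entrywise-nonnegative inverse, so that $-\mathsf{S}^{-1}\geq 0$ entrywise.
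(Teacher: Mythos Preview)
Your proposal is correct and follows essentially the same route as the paper: the row-sum identity is obtained via the same bookkeeping identification $|A|_L^{g\ell}\mathsf{D}_{g\ell}\vones[g\ell] = [V_L^*]B_{LG}V_G$ followed by substitution of \eqref{Eq:VLstar} and \eqref{Eq:Qcrit}. The one point worth flagging is that the lemma is stated and used in the general radial setting (it appears in the proof of Theorem~\ref{Thm:Main2} as well), so $\mathsf{S}$ need not be diagonal; the paper therefore argues nonnegativity directly from the $M$-matrix property of $-\mathsf{S}$, which is exactly the Stieltjes-matrix observation you relegate to a closing remark --- you should promote that to your primary argument rather than the diagonal special case.
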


\begin{pfof}{Lemma \ref{Lem:RowStochastic}}
First, note that since $-\mathsf{S}$ is a nonsingular $M$-matrix, $\mathsf{S}^{-1}$ has nonpositive elements, and hence from \eqref{Eq:IncidenceAbs} and \eqref{Eq:DMatrix} so does the product $\frac{1}{4}\mathsf{S}^{-1}|A|_L^{g\ell}\mathsf{D}_{g\ell}$. We compute using \eqref{Eq:Qcrit} that
\begin{align*}
-\frac{1}{4}\mathsf{S}^{-1}|A|_L^{g\ell}\mathsf{D}_{g\ell}\vones[g\ell] &= -[V_L^*]^{-1}B_{LL}^{-1}[V_L^*]^{-1}|A|_L^{g\ell}\mathsf{D}_{g\ell}\vones[g\ell]\,.
\end{align*}
Since the susceptance matrix $B$ and the incidence matrix $A$ are related by $B = A \,[b_{ij}]_{(i,j)\in\mathcal{E}} \, A^{\sf T} + [b_{\rm shunt}]$, one may use the partitioning of the incidence matrix \eqref{Eq:Incidence} to show that
\begin{align}\nonumber
B_{LG} &= A_L^{g\ell}[b_{ij}]_{(i,j)\in\mathcal{E}^{g\ell}}(A_G^{g\ell})^{\sf T}\\
\label{Eq:BLGIncidence}
&= |A|_L^{g\ell}[B_{ij}]_{(i,j)\in\mathcal{E}^{g\ell}}(|A|_G^{g\ell})^{\sf T}\,.
\end{align}
One can then verify using \eqref{Eq:IncidenceAbs}, \eqref{Eq:DMatrix} and \eqref{Eq:BLGIncidence} that
\begin{equation}\label{Eq:genloadcalculation}
\begin{aligned}
|A|_L^{g\ell}\mathsf{D}_{g\ell}\vones[g\ell] &= |A|_L^{g\ell}[V_j^*B_{ij}V_i^*]_{(i,j)\in\mathcal{E}^{g\ell}}\vones[g\ell]\\
&= [V_L^*]|A|_L^{g\ell}[B_{ij}V_i]_{(i,j)\in\mathcal{E}^{g\ell}}\vones[g\ell]\\
&= [V_L^*]|A|_L^{g\ell}[B_{ij}]_{(i,j)\in\mathcal{E}^{g\ell}}(|A|_G^{g\ell})^{\sf T}V_G\\
&= [V_L^*]B_{LG}V_G\,.
\end{aligned}
\end{equation}
Inserting $V_L^*$ from \eqref{Eq:VLstar}, it follows that
\begin{align*}
N\vones[g\ell] &= -\frac{1}{4}\mathsf{S}^{-1}|A|_L^{g\ell}\mathsf{D}_{g\ell}\vones[g\ell]\\ &= -[V_L^*]^{-1}B_{LL}^{-1}B_{LG}V_G = [V_L^*]^{-1}V_L^* = \vones[n]\,,
\end{align*}
which completes the proof.
\end{pfof}


\begin{lemma}[\bf Solutions of a Quartic Inequality]\label{Lem:QuarticI}
For $\delta \in {[0,1)}$, consider the quartic inequality
\begin{equation}\label{Eq:QuarticI}
(1-\delta)^4 - (1-\delta)^2\left(1-\frac{\Delta}{2}\right) + \Gamma^2 + \frac{1}{16}\Delta^2 \leq 0\,,
\end{equation}
with parameters $\Delta, \Gamma \geq 0$. The following two statements are equivalent:
\begin{enumerate}[(i)]\setlength{\itemsep}{1.5pt}
\item $\Delta + 4\Gamma^2 < 1$\,;
\item the inequality \eqref{Eq:QuarticI} is satisfied on the interval $\mathcal{I} = [\delta_-,\delta_+] \subset [0,1]$ and satisfied with strict inequality on the interior $\mathrm{int}(\mathcal{I})$, where $\delta_- \in {[0,\frac{1}{2})}$ and $\delta_+ \in {(1-\frac{1}{\sqrt{2}},1]}$ are the unique solutions to \eqref{Eq:QuarticI} with equality sign, satisfying $0 \leq \delta_- < \delta_+ \leq 1$ and defined by
$$
\delta_{\mp} = 1 - \sqrt{\frac{1}{2}\left(1-\frac{\Delta}{2}\pm\sqrt{1-\left(\Delta + 4\Gamma^2\right)}\right)}\,.
$$
\end{enumerate}
\end{lemma}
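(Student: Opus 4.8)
The plan is to collapse the quartic to a quadratic through the substitution $w = (1-\delta)^2$. With $q(w) \triangleq w^2 - (1-\tfrac{\Delta}{2})w + \Gamma^2 + \tfrac{1}{16}\Delta^2$, the inequality \eqref{Eq:QuarticI} is exactly $q(w) \le 0$. This $q$ is precisely the quadratic-in-$v^2$ appearing in the two-bus analysis \eqref{Eq:SingleLineIntermediate}, with $w$ playing the role of $v^2$, so I would borrow that root structure directly rather than re-deriving it.

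First I would record the discriminant of $q$. A one-line computation gives $(1-\tfrac{\Delta}{2})^2 - 4(\Gamma^2 + \tfrac{1}{16}\Delta^2) = 1 - (\Delta + 4\Gamma^2)$, whence the roots are $w_\pm = \tfrac{1}{2}\bigl(1 - \tfrac{\Delta}{2} \pm \sqrt{1 - (\Delta + 4\Gamma^2)}\bigr)$, agreeing with \eqref{Eq:SingleLineIntermediate2}. Since $q$ opens upward, $q(w) \le 0$ has a nonempty, nondegenerate solution set if and only if this discriminant is strictly positive, i.e.\ if and only if $\Delta + 4\Gamma^2 < 1$; in that regime $q(w) \le 0$ exactly on $[w_-, w_+]$, with strict inequality on $(w_-, w_+)$ and equality at the endpoints. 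This already supplies both directions of the equivalence together with the sharp boundary behaviour: if $\Delta + 4\Gamma^2 \ge 1$ the discriminant is nonpositive and $q(w) \le 0$ holds at most at an isolated point, so no nondegenerate interval can occur and (ii) fails.

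Next I would push this back to the $\delta$-variable. On $\delta \in [0,1)$ the map $\delta \mapsto w = (1-\delta)^2$ is a strictly decreasing bijection onto $(0,1]$, with inverse $\delta = 1 - \sqrt{w}$. Because $q$ sees $\delta$ only through $w$, the characterization transfers verbatim: the orientation-reversing inverse sends $[w_-, w_+]$ to $[\,1-\sqrt{w_+},\,1-\sqrt{w_-}\,] = [\delta_-, \delta_+]$, interchanging the endpoints so that $q \le 0$ holds on $\mathcal{I} = [\delta_-,\delta_+]$, strictly on $\mathrm{int}(\mathcal{I})$, with equality exactly at $\delta_\mp$. Substituting the root formulas reproduces the stated expressions $\delta_\mp = 1 - v_\pm$, and bijectivity makes these the unique equality solutions.

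It remains to confirm the endpoint ranges $\delta_- \in [0,\tfrac{1}{2})$ and $\delta_+ \in (1-\tfrac{1}{\sqrt{2}},1]$. Since $\delta_\mp = 1 - v_\pm$, these are equivalent to $v_+ \in (\tfrac{1}{2},1]$ and $v_- \in [0,\tfrac{1}{\sqrt{2}})$, which I would obtain by a direct monotonicity check over the closed region $\{(\Delta,\Gamma) : \Delta + 4\Gamma^2 \le 1\}$: the root $w_+$ is decreasing in both $\Delta$ and $\Gamma$ and attains its maximum $1$ only at $(0,0)$, while $w_-$ attains its supremum $\tfrac{1}{2}$ only at $(0,\tfrac{1}{2})$. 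The genuine obstacle is not the algebra but the bookkeeping around the orientation-reversing substitution: one must keep the index convention $\delta_- = 1 - v_+$, $\delta_+ = 1 - v_-$ consistent throughout, and verify that $w_\pm$ indeed land in $(0,1]$ so that their preimages lie in $[0,1]$ with exactly the claimed tight endpoints.
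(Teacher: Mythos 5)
Your proposal is correct and follows exactly the route the paper intends: the paper's proof is the one-line remark that the result ``follows by directly solving \eqref{Eq:QuarticI} with equality sign and examining the properties of the solutions,'' and your substitution $w=(1-\delta)^2$, discriminant computation $1-(\Delta+4\Gamma^2)$, and orientation-reversing transfer back to $\delta$ is precisely that argument carried out in full. The endpoint-range checks you defer to a monotonicity argument are the same ones the paper performs in Step 1 of the proof of Theorem \ref{Thm:Main1}, so nothing is missing.
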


\begin{proof}.
The proof follows by directly solving \eqref{Eq:QuarticI} with equality sign and examining the properties of the solutions.
\end{proof}

\begin{lemma}[\bf Jacobian Conditions for Contraction]\label{Lem:Contraction}
Let $\mathcal{X} \subset \real^n$ be convex set and let $\map{f}{\mathcal{X}}{\real^n}$ be a $C^1$ function on $\mathcal{X}$. If there exists a $\beta \in [0,1)$ such that $\|\frac{\partial f}{\partial x}(x)\| \leq \beta$ for all $x \in \mathcal{X}$, then $f$ is a contraction mapping on $\mathcal{X}$. 
\end{lemma}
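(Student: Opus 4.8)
The plan is to prove this via the mean value inequality along line segments, which is the standard route for converting a pointwise Jacobian bound into a global Lipschitz (hence contraction) bound. First I would fix two arbitrary points $x,y \in \mathcal{X}$ and exploit convexity: the segment $\gamma(t) \triangleq x + t(y-x)$, $t \in [0,1]$, lies entirely in $\mathcal{X}$, so $f$ and its Jacobian are defined and continuous all along it. Setting $g(t) \triangleq f(\gamma(t))$, the chain rule gives $g'(t) = \frac{\partial f}{\partial x}(\gamma(t))(y-x)$, which is continuous in $t$ because $f$ is $C^1$.

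Next I would apply the fundamental theorem of calculus componentwise to write $f(y)-f(x) = g(1)-g(0) = \int_0^1 \frac{\partial f}{\partial x}(\gamma(t))(y-x)\,\mathrm{d}t$. Taking the vector norm of both sides and moving it inside the integral via the integral form of the triangle inequality yields $\norm{f(y)-f(x)} \leq \int_0^1 \norm{\frac{\partial f}{\partial x}(\gamma(t))(y-x)}\,\mathrm{d}t$.

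The final step uses the compatibility between the vector norm $\norm{\cdot}$ and the induced (operator) matrix norm in which the hypothesis is stated: for each $t$ we have $\norm{\frac{\partial f}{\partial x}(\gamma(t))(y-x)} \leq \norm{\frac{\partial f}{\partial x}(\gamma(t))}\,\norm{y-x} \leq \beta\norm{y-x}$, invoking the bound $\norm{\frac{\partial f}{\partial x}(\cdot)} \leq \beta$. Integrating this constant over $[0,1]$ gives $\norm{f(y)-f(x)} \leq \beta\norm{y-x}$, which is exactly the contraction property with rate $\beta \in [0,1)$, as required.

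This is a classical result and there is no deep obstacle; the one point I would state explicitly to avoid ambiguity is the interpretation of the matrix norm on the Jacobian. The submultiplicative estimate $\norm{Mv} \leq \norm{M}\norm{v}$ requires that $\norm{M}$ denote the operator norm subordinate to the very vector norm $\norm{\cdot}$ appearing in the paper's definition of a contraction map. With that convention fixed, everything else is routine, and the only care needed is keeping the vector and matrix norms consistent throughout.
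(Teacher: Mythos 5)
Your proof is correct. The paper itself states Lemma \ref{Lem:Contraction} without proof, treating it as a classical fact, so there is no in-paper argument to compare against; your mean-value-inequality route (parametrize the segment using convexity, apply the fundamental theorem of calculus to $g(t) = f(x + t(y-x))$, pull the norm inside the integral, and bound the integrand by the induced operator norm) is exactly the standard derivation one would cite. Your closing remark is the right one to make explicit: the estimate $\|M v\| \leq \|M\|\,\|v\|$ requires that the matrix norm on the Jacobian be the operator norm subordinate to (or at least consistent with) the vector norm used in the definition of a contraction, which matches how the lemma is invoked in Step 3 of the proof of Theorem \ref{Thm:Main1}, where the relevant quantity is a scalar derivative and the distinction is moot.
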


%

\vspace{-2em}

\begin{IEEEbiography}[{\includegraphics[width=1in,height=1.25in,clip,keepaspectratio]{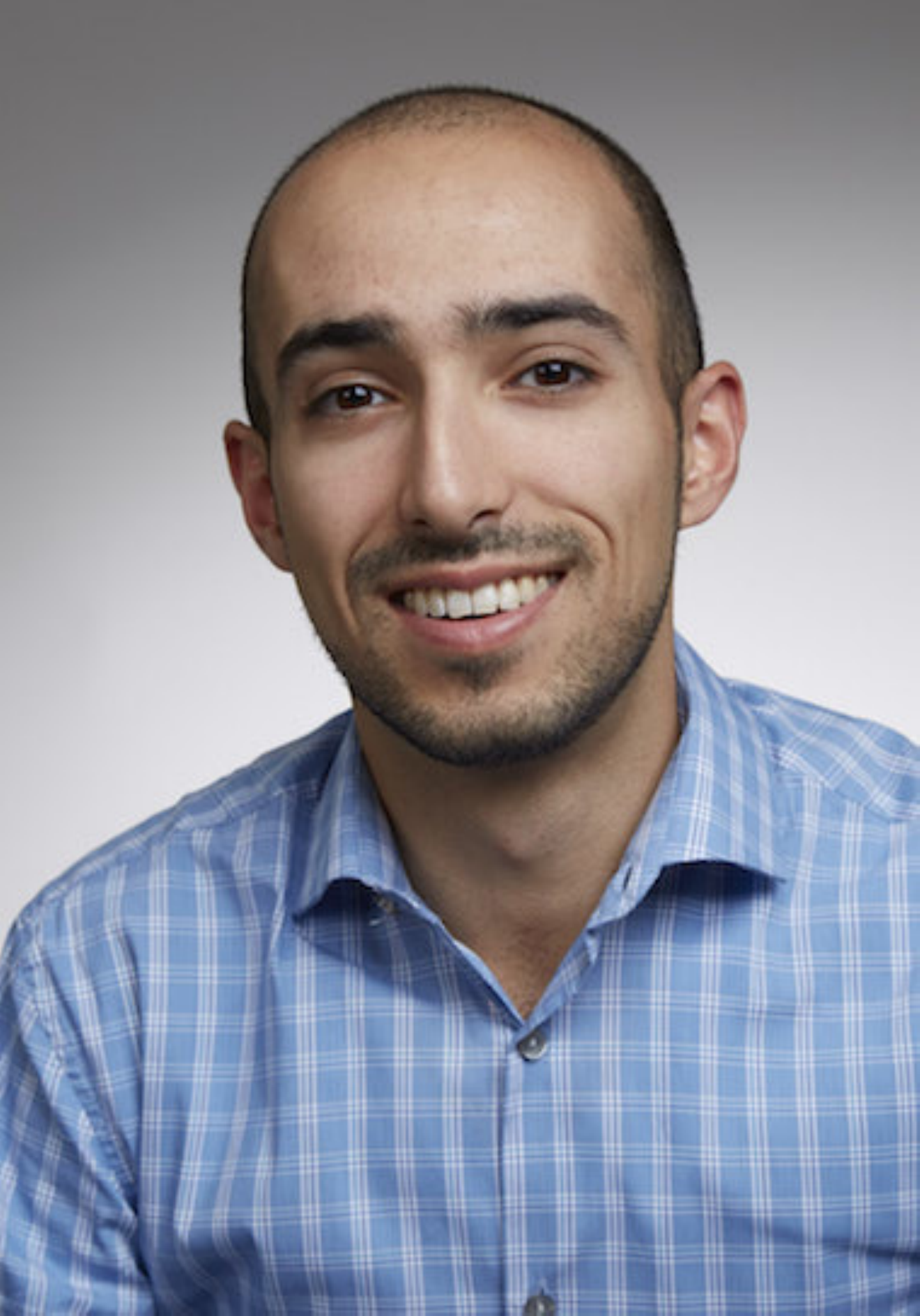}}]{John W. Simpson-Porco} (S'11--M'16) received the B.Sc. degree in engineering physics from Queen's University, Kingston, ON, Canada in 2010, and the Ph.D. degree in mechanical engineering from the University of California at Santa Barbara, Santa Barbara, CA, USA in 2015.

He is currently an Assistant Professor of Electrical and Computer Engineering at the University of Waterloo, Waterloo, ON, Canada. He was previously a visiting scientist with the Automatic Control Laboratory at ETH Z\"{u}rich, Z\"{u}rich, Switzerland. His research focuses on the control and optimization of multi-agent systems and networks, with applications in modernized power grids.

Prof. Simpson-Porco is a recipient of the 2012--2014 IFAC Automatica Prize and the Center for Control, Dynamical Systems and Computation Best Thesis Award and Outstanding Scholar Fellowship.
\end{IEEEbiography}
%
%
%

\section{Omitted Proofs}




\begin{pfof}{Theorem \ref{Thm:Main1} (Continued)}
Here we complete the proof that for loading scenario (ii) in the theorem statement, the conditions \eqref{Eq:Conditions1} are necessary and sufficient as a function of $\alpha$. We proceed by contraposition and show that this load profile yields a case where the voltage stability condition \eqref{Eq:Omega1} is satisfied with equality and the conclusions of the theorem do not hold. Since $Q_i = 0$ for each \PQ bus $i \in \mathcal{N}_L$, it follows that $\Delta_i = 0$. By construction, for the scenario described the branch active power flows are given by $p = (p_{g\ell},p_{gg})^{\sf T}$ where $p_{gg} = \vzeros[gg]$ and $p_{g\ell} = \frac{\alpha}{2}\mathsf{D}_{g\ell}\vones[g\ell]$.  Therefore, we calculate for each $i \in \mathcal{N}_{L}$ that $\Gamma_{i} = \alpha/2$, and for each $(i,j)\in\Egg$ that $\Gamma_{ij} = 0$. The voltage stability condition \eqref{Eq:Omega1} becomes $\alpha^2 < 1$, and \eqref{Eq:delta1} becomes $v_{i,\pm}(\alpha) = v_{\pm}(\alpha) \triangleq \sqrt{(1 \pm \sqrt{1-\alpha^2})/2}$. We claim that two fixed points of \eqref{Eq:FixedPointCompact} are given explicitly by $x_{\pm}(\alpha) = -\delta_{\pm}(\alpha)\vones[n]$, where $\delta_{\pm}(\alpha) = 1-v_{\mp}(\alpha)$. To see this, note first from \eqref{Eq:ugl} that
\begin{align*}
u_{g\ell}(x_{\pm}) &= 
\left(1-\sqrt{1-{\alpha^2}/{4(1-\delta_{\pm})^2}}\right)\vones[g\ell]
\end{align*}
and hence that
$$
f(x_{\pm}) = Nu_{g\ell}(x_{\pm}) = \left(1-\sqrt{1-{\alpha^2}/{4(1-\delta_{\pm})^2}}\right)\vones[n]\,,
$$
since $N$ is row-stochastic. A somewhat messy calculation confirms that the above reduces to $f(x_{\pm}) = x_{\pm}$, so $x_{\pm}$ are both in fact fixed points. 
It follows that for $\alpha \in [0,1)$, $x_-(\alpha) \in [-\delta_-,0]^n$ is the unique fixed point described by the theorem. However, as $\alpha \rightarrow 1$ the solutions $x_-(\alpha)$ and $x_+(\alpha)$ coalesce at the point $x = (1-\frac{1}{\sqrt{2}})\vones[n]$ in a saddle-node bifurcation.
By continuity then, at $\alpha = 1$ when the condition \eqref{Eq:Omega1} is only marginally satisfied, there are \emph{two} solutions within the set $[-\delta_-,0]^n$, both of which occur on the boundary of the set. We therefore have non-uniqueness of the solution in this set, which shows the statement of the theorem fails, completing the proof.
\end{pfof}


\begin{pfof}{Theorem \ref{Thm:Main2}}
That $v_+$, $\gamma_{g\ell}$, and $\gamma_{gg}$ are well-defined under the conditions \eqref{Eq:Omega2} and \eqref{Eq:Gammagg2} follows from arguments identical to those in the proof of Theorem \ref{Thm:Main1}. To show that $\gamma_{\ell\ell}$ is well-defined, we compute using \eqref{Eq:gamma2} that
$$
\sin(\gamma_{\ell\ell}) = \frac{\Gamma_{\ell\ell}}{(v_+)^2} < \frac{1/4}{(\frac{1}{2})^2} = 1\,,
$$
and therefore $\gamma_{\ell\ell} \in {[0,\frac{\pi}{2})}$ as claimed. All quantities in  \eqref{Eq:delta2}--\eqref{Eq:gamma22} are therefore well-defined and belong to the specified sets. 
Changing variables to $x \triangleq v - \vones[n]$, the FPPF \eqref{Eq:FixedPoint} becomes
\begin{equation}\label{Eq:FixedPointx}
\begin{aligned}
f(x) &\triangleq -\frac{1}{4}\mathsf{S}^{-1}[Q_L]r(x) -N u_{g\ell}(x)\\
&\quad + \frac{1}{4}\mathsf{S}^{-1}A_L^{\ell\ell}(+)\,[A_L^{\ell\ell}(-)^{\sf T}(\vones[n]+x)]\mathsf{D}_{\ell\ell}u_{\ell\ell}(x)\\
&\quad + \frac{1}{4}\mathsf{S}^{-1}A_L^{\ell\ell}(-)\,[A_L^{\ell\ell}(+)^{\sf T}(\vones[n]+x)]\mathsf{D}_{\ell\ell}u_{\ell\ell}(x)\,,
\end{aligned}
\end{equation}
where $r(x)$ and $N$ are as defined below \eqref{Eq:FixedPointCompact}. For $\delta \in {[0,\frac{1}{2})}$, we will now seek to show invariance of the compact, convex set $\mathcal{B}{([-\delta,0])}\triangleq \setdef{x \in \real^n}{-\delta\vones[n]\leq x\leq\vzeros[n]}$ under the fixed-point map \eqref{Eq:FixedPointx}.
Suppose that $x \in \mathcal{B}([-\delta,0])$. The first two terms in \eqref{Eq:FixedPointx} are nonpositive by the same arguments as in the proof of Theorem \ref{Thm:Main1}. The third and fourth terms in \eqref{Eq:FixedPointx} are products on nonnegative matrices and vectors with the nonpositive matrix $\mathsf{S}^{-1}$. It follows that $f(x) \leq \vzeros[n]$.

We now proceed to lower bound $f(x)$. Working on the first term, by definition of $r(x)$ it holds that
\begin{align}\label{Eq:FirstTermBoundTemp}
\mathsf{S}^{-1}[Q_L]r(x) &\leq \mathsf{S}^{-1}[Q_L]\vones[n]\cdot \frac{1}{1-\delta} = \frac{1}{1-\delta}\mathsf{S}^{-1}Q_L
\end{align}
where we have used that $\mathsf{S}^{-1}$ and $Q_L$ both have nonpositive elements. Working on the second term now, a bounding very similar to \eqref{Eq:Fcomponentbounding}-\eqref{Eq:uijbounding} shows that
\begin{equation}\label{Eq:BoundGammaTemp2}
Nu_{g\ell}(x) \leq \left(1 - \sqrt{1-\Gamma_{g\ell}^2/(1-\delta)^2}\right)\vones[g\ell]\,,
\end{equation}
where $\Gamma_{gl}$ is as in \eqref{Eq:Gamma2}. We now direct our attention to the final two terms in \eqref{Eq:FixedPointx}. Since for any $y \in [0,1]$ it holds that $1-\sqrt{1-y} \leq y$, it follows from \eqref{Eq:ull} that
\begin{equation}\label{Eq:QuadraticBoundsForUll}
u_{\ell\ell}(v) \leq [h_{\ell\ell}(v)]^{-2}\mathsf{D}_{\ell\ell}^{-2}[p_{\ell\ell}]p_{\ell\ell}\,.
\end{equation}
Moreover, in vector form one may deduce from \eqref{Eq:IncidencePlusMinus} that $h_{\ell\ell}(v) = [A_L^{\ell\ell}(+)^{\sf T}v]\,A_L^{\ell\ell}(-)^{\sf T}v$.
We may therefore bound the third term in \eqref{Eq:FixedPointx} as
\begin{equation}\label{Eq:BadTermBound1}
\begin{aligned}
&A_{L}^{\ell\ell}(+)[A_L^{\ell\ell}(-)^{\sf T}v]\mathsf{D}_{\ell\ell}u_{\ell\ell}(v)\\
& \leq A_{L}^{\ell\ell}(+)[A_L^{\ell\ell}(-)^{\sf T}v]\mathsf{D}_{\ell\ell}[h_{\ell\ell}(v)]^{-2}\mathsf{D}_{\ell\ell}^{-2}[p_{\ell\ell}]p_{\ell\ell}\\
&= A_{L}^{\ell\ell}(+)[A_L^{\ell\ell}(-)^{\sf T}v][h_{\ell\ell}(v)]^{-2}\mathsf{D}_{\ell\ell}^{-1}[p_{\ell\ell}]p_{\ell\ell}\\
&= A_{L}^{\ell\ell}(+)[A_L^{\ell\ell}(-)^{\sf T}v][A_L^{\ell\ell}(+)^{\sf T}v]^{-2}[A_L^{\ell\ell}(-)^{\sf T}v]^{-2}\mathsf{D}_{\ell\ell}^{-1}[p_{\ell\ell}]p_{\ell\ell}\\
&= A_{L}^{\ell\ell}(+)[A_L^{\ell\ell}(+)^{\sf T}v]^{-2}[A_L^{\ell\ell}(-)^{\sf T}v]^{-1}\mathsf{D}_{\ell\ell}^{-1}[p_{\ell\ell}]p_{\ell\ell}\\
& < 4\,A_{L}^{\ell\ell}(+)\mathsf{D}_{\ell\ell}^{-1}[p_{\ell\ell}]^2[A_L^{\ell\ell}(+)^{\sf T}v]^{-1}\vones[\ell\ell]\\
&= \left(4\,A_{L}^{\ell\ell}(+)\mathsf{D}_{\ell\ell}^{-1}[p_{\ell\ell}]^2\,A_L^{\ell\ell}(+)^{\sf T}\right)[v]^{-1}\vones[n]\\
&= 4\,A_{L}^{\ell\ell}(+)\mathsf{D}_{\ell\ell}^{-1}[p_{\ell\ell}]^2\,A_L^{\ell\ell}(+)^{\sf T}r(x)\\
&\leq \frac{4}{1-\delta}\,A_{L}^{\ell\ell}(+)\mathsf{D}_{\ell\ell}^{-1}[p_{\ell\ell}]^2\,A_L^{\ell\ell}(+)^{\sf T}\vones[n]\\
&= \frac{4}{1-\delta}\,A_{L}^{\ell\ell}(+)\mathsf{D}_{\ell\ell}^{-1}[p_{\ell\ell}]p_{\ell\ell}
\end{aligned}
\end{equation}
where we have substituted for $u_{\ell\ell}(v)$ using \eqref{Eq:QuadraticBoundsForUll}, substituted for $h_{\ell\ell}(v)$, then used the bound that $x_i \geq -\delta > -\frac{1}{2}$, which is equivalent to $v_i > \frac{1}{2}$. We have also rearranged diagonal matrices at several points, used the nonnegativity of the terms in the product, and used the identity
\begin{align*}
[A_L^{\ell\ell}(\pm)^{\sf T}v]^{-1}\vones[\ell\ell] &= A_L^{\ell\ell}(\pm)^{\sf T}[v]^{-1}\vones[n]
\end{align*}
to simplify.
Finally, we then returned to the $x$ variables, and used the bound $x_i \geq -\delta$ along with the fact that $A_L^{\ell\ell}(+)^{\sf T}\vones[n] = \vones[\ell\ell]$. Similar bounding on the fourth term in \eqref{Eq:FixedPointx} leads to
\begin{equation}\label{Eq:BadTermBound2}
\begin{aligned}
A_{L}^{\ell\ell}(-)[A_L^{\ell\ell}(+)^{\sf T}v]\mathsf{D}_{\ell\ell}u_{\ell\ell}(v) &\leq \frac{4}{1-\delta}\,A_{L}^{\ell\ell}(-)\mathsf{D}_{\ell\ell}^{-1}[p_{\ell\ell}]p_{\ell\ell}
\end{aligned}
\end{equation}
Putting things together now by inserting the bounds \eqref{Eq:FirstTermBoundTemp}, \eqref{Eq:BoundGammaTemp2}, \eqref{Eq:BadTermBound1} and \eqref{Eq:BadTermBound2} into \eqref{Eq:FixedPointx} and using that $\mathsf{S}^{-1}$ has nonpositive elements, we have that
\begin{align}\nonumber
f(x) 
\label{Eq:VectorBoundedFixedPoint}
&\geq -\frac{1}{4}\frac{1}{1-\delta}\mathsf{S}^{-1}Q_{\rm effective}\\
&\quad -\left(1 - \sqrt{1-\Gamma_{g\ell}^2/(1-\delta)^2}\right)\vones[g\ell]
\end{align}
where we have defined the nonpositive vector
\begin{align}
Q_{\rm effective} &\triangleq Q_L - 4\left(A_L^{\ell\ell}(+)+A_{L}^{\ell\ell}(-)\right)\mathsf{D}_{\ell\ell}^{-1}[p_{\ell\ell}]p_{\ell\ell}\\
&= Q_L - 4|A|_{L}^{\ell\ell}\mathsf{D}_{\ell\ell}^{-1}[p_{\ell\ell}]p_{\ell\ell}\,.
\end{align}
A sufficient condition to have $f(x) \geq -\delta \vones[n]$ is therefore that
$$
\frac{1}{4}\frac{1}{1-\delta}\Delta + 1 - \sqrt{1-\frac{\Gamma_{g\ell}^2}{(1-\delta)^2}} \leq \delta\,,
$$
where we have inserted $\Delta = \|\mathsf{S}^{-1}Q_{\rm effective}\|_{\infty}$ from \eqref{Eq:Delta2}. This is the same form of inequality encountered in \eqref{Eq:ScalarFixedPointInequality} during the proof of Theorem \ref{Thm:Main1}. In the same way as before, we conclude from the Brouwer fixed-point theorem that if \eqref{Eq:Omega2} holds, then there exists a fixed-point of \eqref{Eq:FixedPointx} in the set $\mathcal{B}([-\delta_-,0])$, where $\delta_- = 1-v_{+}$ with $v_+$ as in \eqref{Eq:delta2}. Equivalently, there exists a fixed-point $v$ of \eqref{Eq:FixedPoint} satisfying $v_+\vones[n] \leq v \leq \vones[n]$. The solution bounds \eqref{Eq:SolutionBoundsMain21}, \eqref{Eq:SolutionBoundsMain23}, and \eqref{Eq:SolutionBoundsMain24} follow exactly as in Theorem \ref{Thm:Main1}. To show the bound \eqref{Eq:SolutionBoundsMain22}, we return to the active power flow \eqref{Eq:psix}, for which the appropriate subequation reads as
$$
\bsin(\eta_{\ell\ell}) = [h_{\ell\ell}(v)]^{-1}\mathsf{D}_{\ell\ell}^{-1}p_{\ell\ell}\,.
$$
We therefore compute that
\begin{align*}
\|\bsin(\eta_{\ell\ell})\|_{\infty} &\leq \|[h_{\ell\ell}(v)]^{-1}\|_{\infty}\|\mathsf{D}_{\ell\ell}^{-1}p_{\ell\ell}\|_{\infty}\\
&\leq \frac{\Gamma_{\ell\ell}}{(v_+)^2} = \sin(\gamma_{\ell\ell}) < 1\,.
\end{align*}
It follows then that the power flow equations \eqref{Eq:Active}--\eqref{Eq:Reactive} possess a solution $(\theta,V_L)$ satisfying the bounds \eqref{Eq:SolutionBoundsMain2}.
\end{pfof}

\end{document}